\newtheorem{theorem}{Theorem}[section]
\newtheorem{theoremLetter}{Theorem}[section]
\newtheorem{proposition}[theorem]{Proposition}
\newtheorem{lemma}[theorem]{Lemma}
\newtheorem{definition}[theorem]{Definition}
\newtheorem{remark}[theorem]{Remark}
\newtheorem{notation}[theorem]{Notation}
\newcommand{\Z}{\mathbb{Z}}
\newcommand{\N}{\mathbb{N}}
\DeclareMathOperator{\End}{End}
\DeclareMathOperator{\Id}{Id}
\DeclareMathOperator{\Image}{Im}
\title{Twisted conjugacy growth series of virtually abelian groups}
\author[A.~Evetts]{Alex Evetts}
\address{The University of Manchester, Manchester, UK, and Heilbronn Institute for Mathematical Research, Bristol, UK}
\email{alex.evetts@manchester.ac.uk}
\author[M.~Lathouwers]{Maarten Lathouwers}
\address{KU Leuven Kulak Kortrijk Campus, Kortrijk, Belgium, and funded by FWO PhD-fellowship fundamental research (file number: 1102424N).}
\email{maarten.lathouwers@kuleuven.be}
\subjclass[2020]{20F65, 20F10}
\keywords{twisted conjugacy, growth series, virtually abelian, finitely generated}
\date{}
\begin{document}

\begin{abstract}
    We initiate the study of the \emph{twisted conjugacy growth series} of a finitely generated group, the formal power series associated to the twisted conjugacy growth function. Our main result is that, for a virtually abelian group, this series is always an explicitly computable $\N$-rational function. As a corollary, we obtain a similar result for the relative growth series of a twisted conjugacy class in a virtually abelian group.
\end{abstract}

\maketitle

\section{Introduction}
In finitely generated groups, it is long established to study the \emph{(word) growth} of the group, i.e. the function that counts how many group elements can be represented by words of a fixed length in the generators (and their inverses). A standard reference is \cite{Mann}. In 1988, Babenko \cite{Babenko} was the first to introduce the concept of \emph{conjugacy growth} where one is interested in conjugacy classes that are represented by words of a fixed length. The conjugacy growth function has been studied extensively (see for example \cite{GubaSapir} for an overview). As with all growth functions, one can associate a formal power series, called the \emph{conjugacy growth series}, to the conjugacy growth. In \cite{AntolinCiobanu} and \cite{CHHR}, it was shown that a hyperbolic group has a rational conjugacy growth series if and only if it is virtually cyclic, which confirmed a conjecture of Rivin. Other work on conjugacy growth includes \cite{CEH}, \cite{Heisenberg}, \cite{GY}, \cite{Greenfeld}.

Conjugacy in a group $G$ can be generalised by considering an endomorphism $\varphi\in \End(G)$. More precisely, we say that two elements $a,b\in G$ are \emph{$\varphi$-twisted conjugate} (also known as \emph{twisted conjugate}) if there exists some $c\in G$ such that $a=cb\varphi(c)^{-1}$. This induces an equivalence relation on the group $G$ and partitions it into \emph{$\varphi$-twisted conjugacy classes} (or \emph{twisted conjugacy classes}). A group $G$ has the \emph{$R_\infty$-property} if all automorphisms of $G$ give rise to an infinite number of twisted conjugacy classes. This property has particularly nice connections with fixed point theory (see \cite{HeathKeppelmann}, \cite{Jiang}) and has been extensively studied (see for example \cite{DekimpeGoncalves}, \cite{Nasybullov}). Instead of only wondering if a morphism induces an infinite number of twisted conjugacy classes, the \emph{twisted conjugacy growth} studies how the number of twisted conjugacy classes (with a representative of increasing length) behaves.\\

In \cite{Benson}, Benson studied the growth of finitely generated virtually abelian groups by proving that the standard (weighted) growth series is rational. His techniques, including for example the use of polyhedral sets, were used multiple times in order to prove rationality results for virtually abelian groups. In \cite{Evetts}, the first named author generalised Benson's techniques to show that the (weighted) conjugacy growth series and the (weighted) coset growth series are rational for all virtually abelian groups. In Section \ref{sec:coset}, we make the arguments of the latter constructive. For more applications of polyhedral sets to virtually abelian groups, see \cite{Bishop}, \cite{CE}, \cite{CEL}. In \cite{DL}, the second named author began the systematic study of twisted conjugacy growth by considering finitely generated virtually abelian groups. The current paper continues the extensive work on virtually abelian groups by proving that the formal power series that is associated to the twisted conjugacy growth, namely the \emph{twisted conjugacy growth series}, is always rational, and can moreover be explicitly computed.

The \emph{$\N$-rational} functions are a proper subset of the rational functions (see Definition \ref{def:Nrational}). We say that such a function is \emph{explicitly computable} to mean that there exists an algorithm taking appropriate inputs that outputs the required function (see Definition \ref{def:expcomp}).

\begin{theoremLetter}\label{thmLetter:mainthm}
    Let $G$ be a finitely generated virtually abelian group with a finite weighted (monoid) generating set $\Sigma$ and $\varphi\in \End(G)$. Then the $\varphi$-twisted conjugacy growth series of $G$ with respect to $\Sigma$ is an $\N$-rational function. Moreover, given a description for $G$, the $\N$-rational function is explicitly computable.
\end{theoremLetter}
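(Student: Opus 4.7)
The plan is to adapt Benson's polyhedral machinery, as used for the untwisted conjugacy growth series of virtually abelian groups in \cite{Evetts}, to the twisted setting. First I would pass to a finite-index abelian normal subgroup $A \trianglelefteq G$ that is additionally $\varphi$-invariant; such an $A$ exists by a standard finite iteration combining the bound $[G : \varphi^{-k}(A)] \le [G:A]$ (which forces the chain $\bigcap_{k \le N} \varphi^{-k}(A)$ to stabilise) with repeated intersection over $G$-conjugates, since $G$ has only finitely many subgroups of any given index. Identifying $A \cong \Z^n$, the restriction $M := \varphi|_A$ becomes an integer matrix, and each transversal element $t_i$ of $A$ in $G$ induces an automorphism $\alpha_i(a) = t_i^{-1}at_i$ of $\Z^n$.

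Next I would stratify by cosets of $A$. The endomorphism $\varphi$ induces $\bar\varphi \in \End(G/A)$, and the twisted conjugation action of $G$ descends to the twisted action of $G/A$ on itself; its orbits are finite in number, and I pick representatives $t_{i_1}, \ldots, t_{i_s}$. Every $\varphi$-twisted conjugacy class has a representative in some $t_{i_\ell} A$. For each $\ell$ let $H_\ell \le G$ denote the preimage of the twisted stabiliser of $\bar t_{i_\ell}$; it is exactly the subgroup whose twisted-conjugation action preserves $t_{i_\ell}A$. A short calculation shows that for $c = sb$ with $b \in A$ and $s$ in a transversal $S_\ell$ of $A$ in $H_\ell$,
\[
c \cdot (t_{i_\ell} a) = t_{i_\ell}\bigl(u_s + \alpha_{\varphi(s)}\bigl(a + (\alpha_{i_\ell} - M)(b)\bigr)\bigr)
\]
for an explicit constant $u_s \in A$ determined by $s$. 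Thus the twisted conjugacy classes meeting $t_{i_\ell}A$ correspond to orbits on $\Z^n$ of the finitely generated group of integer affine maps generated by the $S_\ell$-action together with translations by the sublattice $L_\ell := \Image(\alpha_{i_\ell} - M)$.

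After that I would, inside each coset $t_{i_\ell} A$, produce a polyhedral set of canonical minimum-weighted-length representatives, one per orbit. The plan is to combine (i) Benson's piecewise linear description of the weighted length function on $t_{i_\ell}A$, (ii) the Smith normal form of $\alpha_{i_\ell} - M$, which gives $\Z^n / L_\ell$ an explicit direct sum decomposition and allows a lexicographic canonical-coset rule on $\Z^n$, and (iii) finite bookkeeping over the $S_\ell$-action that identifies $L_\ell$-cosets lying in a common twisted orbit. The constructive $\N$-rational weighted version of Benson's theorem (as made explicit in Section \ref{sec:coset} for coset growth) then yields an $\N$-rational, explicitly computable weighted growth series for the chosen representatives in each $t_{i_\ell} A$, and summing over $\ell$ gives Theorem \ref{thmLetter:mainthm}.

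The hard part will be the third step. In the untwisted case ($\varphi = \Id$), each $\alpha_{\varphi(s)}$ is an invertible automorphism and within-coset equivalence is generated by translations alone, so the quotient $\Z^n / L_\ell$ is often finite. For a general endomorphism, both $M$ and the twists $\alpha_{\varphi(s)}$ can fail to be invertible, $L_\ell$ need not have finite index, and within-coset orbits are genuine affine orbits rather than cosets of a lattice. Selecting a canonical orbit representative that is compatible with the piecewise linear weighted length will require a finer polyhedral subdivision of $\Z^n$ than in \cite{Evetts}, and arguing that this selection is itself polyhedral (rather than only semilinear) is the delicate combinatorial core of the argument.
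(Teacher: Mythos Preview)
Your overall strategy (polyhedral machinery applied to a $\varphi$-invariant abelian normal subgroup, reduction to coset questions) is on the right track, but there is a genuine gap in the way you assign weights. You pick one representative coset $t_{i_\ell}A$ for each $\bar\varphi$-orbit in $G/A$ and then propose to produce, inside that single coset, a polyhedral set of minimum-weighted-length representatives. But the weight of a twisted conjugacy class is the minimum over \emph{all} of its elements, and the class typically meets every $A$-coset in the $\bar\varphi$-orbit of $t_{i_\ell}A$, not just $t_{i_\ell}A$ itself. A shortest element may well live in one of the other cosets, so minimising only inside $t_{i_\ell}A$ computes the wrong growth function. Summing over $\ell$ does not repair this, since each class is assigned to exactly one $\ell$.

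The paper avoids this by refusing to privilege a single coset. Writing $D=[G:\Z^n]$, Lemma~\ref{lem:union tc class} expresses each class $[g]_\varphi$ as a union of exactly $D$ cosets $H(t_jg\varphi(t_j)^{-1})\,t_jg\varphi(t_j)^{-1}$, one per transversal element $t_j$; crucially, Lemma~\ref{lem:Z^n conjugates} shows that the whole $D$-tuple of cosets is determined by the $H(g)$-coset of $g$ alone. The paper then uses the effective coset-representative machinery (Theorem~\ref{thm:cosetreps}) to replace each of the $D$ cosets by its unique geodesic representative, packages these into a $D$-tuple of patterned words, verifies (Lemma~\ref{lem:C(pi) polyhedral set}) that the resulting set $C(\bm{\pi})$ is polyhedral via an explicit affine description of the membership condition, and finally applies Proposition~\ref{prop:constructing language} to select from each $D$-tuple an entry of minimal weight. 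This simultaneously handles the correct global minimum and the ``finite bookkeeping'' over the transversal that you flagged as step~(iii), without ever needing to analyse the $S_\ell$-orbit structure on $\Z^n/L_\ell$.

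Separately, your diagnosis of the ``hard part'' is off. The map $\alpha_{\varphi(s)}$ is conjugation by the element $\varphi(s)\in G$ restricted to the normal subgroup $A$, hence always an automorphism of $A$; its invertibility has nothing to do with $\varphi$ being an endomorphism rather than an automorphism. Consequently the $H_\ell$-orbits inside $t_{i_\ell}A$ are finite unions of $L_\ell$-cosets (the $A$-orbits are single $L_\ell$-cosets and the finite group $H_\ell/A$ permutes them), not ``genuine affine orbits'' of a more exotic shape. So the situation is structurally no worse than the untwisted case; what changes is only that $L_\ell=\Image(\alpha_{i_\ell}-M)$ may have infinite index. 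The real missing ingredient in your sketch is the $D$-tuple device above, which gives a clean polyhedral encoding of both the orbit identification and the global weight.
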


In Section \ref{sec:def} we state the definitions of the different types of growth and their associated formal power series; we explain the theory of (effectively constructible) polyhedral sets and we define the input data for finitely generated virtually abelian groups. In Section \ref{sec:coset}, we repeat the construction from \cite{Evetts} to obtain geodesic coset representatives and argue that this can be done in an effective way. In Section \ref{sec:main}, we describe the twisted conjugacy classes in virtually abelian groups and use this in combination with the work from \cite{Benson} and from Section \ref{sec:coset} to prove Theorem \ref{thmLetter:mainthm}. We conclude the paper with a consequence of our description of twisted conjugacy classes, namely that the growth series counting just the elements of any fixed twisted conjugacy class (the relative growth of the twisted conjugacy class) is rational.
\begin{theoremLetter}
    Let $G$ be a finitely generated virtually abelian group with a finite weighted (monoid) generating set $\Sigma$ and $\varphi\in \End(G)$. Then the relative growth series of every $\varphi$-twisted conjugacy class in $G$ with respect to $\Sigma$ is an $\N$-rational function. Moreover, given a description for $G$, the $\N$-rational function is explicitly computable.
\end{theoremLetter}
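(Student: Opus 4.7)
The plan is to reuse the structural description of $\varphi$-twisted conjugacy classes that will be developed in Section~\ref{sec:main} for the proof of Theorem~\ref{thmLetter:mainthm}. Write $G$ as a finite extension of a free abelian normal subgroup $H \cong \Z^d$ with coset representatives $t_1, \dots, t_m$. For a fixed $a \in G$, the twisted conjugacy class
\[
[a]_\varphi = \{\, c a \varphi(c)^{-1} : c \in G\,\}
\]
is parametrised by $c = t_i h$ with $i \in \{1,\dots,m\}$ and $h \in H$. After passing (if necessary) to a finite-index $\varphi$-invariant refinement, each such piece becomes an affine-$\Z$-linear image of $\Z^d$ landing in finitely many cosets of $H$. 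The machinery of Sections~\ref{sec:coset} and~\ref{sec:main} will turn this into an explicit polyhedral description of $[a]_\varphi$ as a subset of $G$, once cosets are identified with geodesic representatives.

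With such a polyhedral description in hand, the relative growth series of $[a]_\varphi$ is exactly the weighted growth series of a polyhedral subset of $G$, and Benson's theorem (in the $\N$-rational form recalled in Section~\ref{sec:def}) yields $\N$-rationality directly. Effectiveness is then inherited from the constructions it relies on: the coset decomposition of $G$, the parametrisation of the twisted class, the effective choice of geodesic coset representatives from Section~\ref{sec:coset}, and the explicit polyhedral set produced in Section~\ref{sec:main} are all algorithmic in the input data for $G$, $\varphi$, and the representative $a$.

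The main obstacle is the same one faced in the proof of Theorem~\ref{thmLetter:mainthm}: ensuring that the parametrised image $\{c a \varphi(c)^{-1}\}$ can be decomposed so that each piece is a genuinely polyhedral (rather than merely rational) subset of $G$. This requires splitting on the coset of $c$ and passing to sublattices on which the affine twist map is injective, so that the polyhedral image theorem can be applied. For the current theorem the bookkeeping is strictly easier than for Theorem~\ref{thmLetter:mainthm}: there one must count each class exactly once by selecting a length-minimising representative, whereas here we count all elements of a single class. Any bounded-multiplicity overcount arising from the parametrisation $c \mapsto c a \varphi(c)^{-1}$ can be handled within the polyhedral framework by finite inclusion–exclusion, preserving both $\N$-rationality and effective computability.
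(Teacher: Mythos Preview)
Your approach is essentially the paper's, but you are making the argument harder than it needs to be and introducing one slightly dangerous step.

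The paper's route is very short: Lemma~\ref{lem:union tc class} shows that for each $t\in T$ the piece ${}^{\Z^nt,\varphi}\{g\}$ is \emph{exactly} the coset $H(tg\varphi(t)^{-1})\,tg\varphi(t)^{-1}$, so $[g]_\varphi$ is a finite union of cosets of subgroups of $\Z^n$. Such a set is a rational subset of $G$ (indeed the image of an expression of the form $\bigcup_i B_i^* a_i$), and the result then follows from the general fact, implicit in \cite{CEL}, that rational subsets of finitely generated virtually abelian groups have explicitly computable $\N$-rational relative growth series.

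Two of your anticipated obstacles are therefore non-issues. First, you do not need to pass to sublattices on which the twist map is injective: Proposition~\ref{prop:polyhedral sets basic properties}(iii) gives that the image of a polyhedral set under an integral affine map is polyhedral with no injectivity hypothesis, and here the image of $\Z^n$ under $x\mapsto x-(\tau_{tg\varphi(t)^{-1}}\circ\varphi)(x)$ is just the subgroup $H(tg\varphi(t)^{-1})$. Second, there is no overcounting to correct: since each piece is already described as a \emph{set} (a coset), you simply take the finite union of these polyhedral sets, which is again polyhedral, and compute its growth series directly. Your proposed fix---finite inclusion--exclusion at the level of series---is the one genuinely risky point: alternating sums of $\N$-rational series are rational but not obviously $\N$-rational, so that argument as written does not quite close. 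It is also unnecessary, for the reason just given.
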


Throughout the paper, we use $\N$ for the non-negative integers and $\N_+$ for the positive integers.

\section{Definitions and basic results}\label{sec:def}
\subsection{Growth}
For a group $G$ and an endomorphism $\varphi\in \End(G)$ we say that two elements $a,b\in G$ are \emph{$\varphi$-twisted conjugate} (or \emph{twisted conjugate}), denoted $a\sim_\varphi b$, if there exists some $c\in G$ such that $a=cb\varphi(c)^{-1}$. This induces an equivalence relation $\sim_\varphi$ on $G$. Its equivalence classes $[a]_\varphi$ are called \emph{$\varphi$-twisted conjugacy classes} (or \emph{twisted conjugacy classes}). Note that if $\varphi=\Id$ is the identity morphism, we obtain the usual notion of conjugacy in the group $G$. Twisted conjugacy is indeed a generalisation of regular conjugacy.\\

Let $G$ be moreover finitely generated by some set $\Sigma$. Throughout the paper, we assume that $\Sigma$ generates $G$ as a monoid, i.e. that we take only positive powers of $\Sigma$. Denote by $\Sigma^\ast$ the words over the alphabet $\Sigma$ and by $\overline{w}\in G$ the evaluation of some word $w\in \Sigma^\ast$ in the group $G$. We call a function $\omega_\Sigma:\Sigma\to \N_+$ a \emph{weight function} on $\Sigma$ and we say that $\Sigma$ is a \emph{finite weighted generating set} of $G$. It induces a weight function on the elements of $G$ by setting for $g\in G$
\[\omega_\Sigma(g)=\begin{cases}
    0, &\text{if } g=e\\
    \min\left\{\sum\limits_{i=1}^r \omega_\Sigma(s_i) : s_1,\ldots,s_r\in \Sigma,\: \overline{s_1\cdots s_r}=g\right\}, &\text{else.}
\end{cases}\]
We call a word $s_1\cdots s_r\in \Sigma^\ast$ a \emph{geodesic representative} of $g\in G$ if $\overline{s_1\cdots s_r}=g$ and  $\omega_\Sigma(g)=\sum_i \omega_\Sigma(s_i)$. If $\omega_\Sigma(s)= 1$ for all $s\in\Sigma$, then the induced weight function on $G$ coincides with the classical word length $|\cdot|_\Sigma$ of an element of $G$ with respect to $\Sigma$. 
\begin{definition}[Growth functions]\label{def:growth functions}
    Let $\Sigma$ be a finite weighted generating set of $G$. Then we define the weight $\omega_\Sigma(U)$ of a subset $U\subset G$ by
    \[\omega_\Sigma(U)=\min\{\omega_\Sigma(u) : u\in U\}.\]
    For an endomorphism $\varphi\in \End(G)$ we define
    \begin{enumerate}
        \item the \emph{(weighted) standard growth function} $\beta_{G,\omega_\Sigma}$ by
            \[\beta_{G,\omega_\Sigma}:\N\to \N: r\mapsto \#\{g\in G : \omega_\Sigma(g)\leq r\}.\]
        \item the \emph{(weighted) ($\varphi$-)twisted conjugacy growth function} $c_{G,\omega_\Sigma}^\varphi$ by
            \[c_{G,\omega_\Sigma}^\varphi:\N\to \N: r\mapsto \#\{[g]_\varphi\in G/\sim_\varphi : \omega_\Sigma([g]_\varphi)\leq r\}.\]
        \item the \emph{(weighted) relative growth function} $\beta_{U\subset G,\omega_\Sigma}$ of $U$ in $G$ by
            \[\beta_{U\subset G,\omega_\Sigma}:\N\to \N: r\mapsto \#\{g\in U: \omega_\Sigma(g)\leq r\}.\]
    \end{enumerate}
\end{definition}

To these growth functions we associate formal power series $B_{G,\omega_\Sigma}$, $C_{G,\omega_\Sigma}^\varphi$ and $B_{U\subset G,\omega_\Sigma}$ by setting, for example,
\[B_{G,\omega_\Sigma}(z)=\sum_{r=0}^\infty \beta_{G,\omega_\Sigma}(r)z^r\]
and we call them the (weighted) \emph{standard growth series}, \emph{($\varphi$-)twisted conjugacy growth series} and \emph{relative growth series} of $U$ in $G$, respectively.\\

\subsection{Polyhedral sets}
We make essential use of the notion of a \emph{polyhedral set}, as introduced by Benson and used in \cite{Benson}, \cite{CEL}, \cite{Evetts}, and others.
\begin{definition}[Polyhedral sets]\label{def:PolSets}
	Let $k\in\N_+$, and let $\cdot$ denote the Euclidean scalar product on $\mathbb{R}^k$.
	\begin{enumerate}
		\item[(i)] A subset of $\Z^k$ of the form
		\begin{enumerate}
			\item[(1)] $\{z\in\Z^k : u\cdot z=a\}$,
			\item[(2)] $\{z\in\Z^k : u\cdot z\equiv a\mod b\}$, or
			\item[(3)] $\{z\in\Z^k : u\cdot z>a\}$
		\end{enumerate}
		for constants $u\in\Z^k$, $a\in\Z$, $b\in\N_+$, is called an \emph{elementary set};
		\item[(ii)]  a finite intersection of elementary sets is called a \emph{basic polyhedral set};
		\item[(iii)]  a finite union of basic polyhedral sets is called a \emph{polyhedral set}.
	\end{enumerate}
\end{definition}
It turns out that the notion of a polyhedral set is just one way to define a very useful class of subsets of free abelian groups. The class of polyhedral sets coincides with \emph{Presburger definable sets}, \emph{semilinear subsets} of $\Z^k$, and \emph{rational subsets} of $\Z^k$ (for definitions see \cite{ES} and \cite{GS}). For a discussion of the equivalences, and more references, see \cite{CE} and \cite{CEL}. The current paper is naturally a generalisation of \cite{Benson} and \cite{Evetts}, so we use the terminology of polyhedral sets for consistency.

\begin{definition}[Effectively constructible]
    A polyhedral set $X$ is called \emph{effectively constructible}, given some input data, if there exists an algorithm that takes the input data and produces a description for $X$ as a finite union of finite intersections of elementary sets.
\end{definition}
For the purposes of the current paper, the input data in question may be descriptions of other polyhedral sets, explicit maps between polyhedral sets, or ultimately the description of a virtually abelian group defined in Definition \ref{def:input data}. Note that we do not make any claims regarding the complexity of such an algorithm.
\begin{definition}[Integral affine transformation]
    A map $\phi\colon \Z^k\to\Z^l$, for some positive integers $k,l$, is an \emph{integral affine transformation} if there is an integer-valued $l\times k$ matrix $A$ and a constant vector $b\in\Z^l$ such that $\phi(x)=Ax+b$ for all $x\in\Z^k$.
\end{definition}

We will make use of the following basic results from the literature, proofs of which can be found in \cite{Benson} or \cite{Silva}.
\begin{proposition}\label{prop:polyhedral sets basic properties}
    If $X$ is a polyhedral subset of $\Z^k$ for some $k\in \N_+$, then the following holds.
	\begin{enumerate}
		\item[$(i)$] If $Y$ is a polyhedral subset of $\Z^k$, then the sets $X\cap Y$, $X\cup Y$, $X\setminus Y$ are polyhedral, and moreover effectively constructible, given the descriptions of $X$ and $Y$.
		\item[$(ii)$] If $Z\subset\Z^l$ is polyhedral for some $l\in \N_+$, then the product $X\times Z$ is a polyhedral subset of $\Z^{k+l}$ that is effectively constructible from the descriptions of $X$ and $Z$.
		\item[$(iii)$] If $\phi\colon \Z^k\to\Z^l$ and $\psi\colon\Z^l\to\Z^k$ are integral affine transformations, then the image $\phi(X)\subset\Z^l$ and preimage $\psi^{-1}(X)\subset\Z^l$ are polyhedral sets, and moreover effectively constructible, given the descriptions of $X$, $\phi$, $\psi$.\\
		In particular, the projection of a polyhedral set $X\subset \Z^k$ onto some subset of the coordinates of $\Z^k$ is an effectively constructible polyhedral set.
	\end{enumerate}
\end{proposition}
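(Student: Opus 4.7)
The plan is to piggyback on the machinery developed for Theorem \ref{thmLetter:mainthm}. The proof of that theorem in Section \ref{sec:main} must, in order to count twisted conjugacy classes, first describe the classes themselves in polyhedral terms; we will simply reuse that description to count the elements of one class rather than the classes themselves. First, fix a normal, finite-index free abelian subgroup $H\unlhd G$ with $H\cong\Z^k$ (part of the standard input data from Definition \ref{def:input data}), together with coset representatives $t_1,\ldots,t_n$ giving $G=\bigsqcup_{i=1}^n t_iH$. Via the bijection $t_ih\leftrightarrow h$, each intersection $[g]_\varphi\cap t_iH$ corresponds to a subset $P_i\subset\Z^k$, and it is precisely the content of Section \ref{sec:main} (needed for Theorem \ref{thmLetter:mainthm}) that each such $P_i$ is an effectively constructible polyhedral set.

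Next, I would apply the weight-partition technology of Benson, made constructive in Section \ref{sec:coset}: for each coset representative $t_i$, there is an effectively constructible polyhedral partition $\Z^k=\bigsqcup_j Q_j^{(i)}$ together with integral affine functions $\lambda_j^{(i)}\colon\Z^k\to\Z$ such that $\omega_\Sigma(t_ih)=\lambda_j^{(i)}(h)$ for every $h\in Q_j^{(i)}$. By Proposition \ref{prop:polyhedral sets basic properties}, the sets $R_{i,j}:=P_i\cap Q_j^{(i)}$ are again effectively constructible polyhedral subsets of $\Z^k$, and on each $R_{i,j}$ the weight function is given by the single integral affine map $\lambda_j^{(i)}$.

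On each piece $(R_{i,j},\lambda_j^{(i)})$, the standard polyhedral-to-$\N$-rational argument used throughout \cite{Benson} and \cite{Evetts} applies: the weighted generating function
\[\sum_{h\in R_{i,j}} z^{\lambda_j^{(i)}(h)}\]
is an effectively computable $\N$-rational function (the function $\lambda_j^{(i)}$ is non-negative on $R_{i,j}$ since it equals a word weight there). Since the $R_{i,j}$ form a finite partition of the subset of $\Z^k$ representing $[g]_\varphi$, summing these generating functions over $(i,j)$ yields $B_{[g]_\varphi\subset G,\omega_\Sigma}(z)$ as a finite sum of $\N$-rational functions, hence $\N$-rational, and effectiveness is preserved at every step.

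The principal obstacle is the polyhedral description of $[g]_\varphi\cap t_iH$ for an arbitrary endomorphism $\varphi$; but this is exactly the structural result established in Section \ref{sec:main} for the proof of Theorem \ref{thmLetter:mainthm}. Once that is in hand, the present theorem is essentially a matter of replacing the ``choose one geodesic representative per class'' step in the proof of Theorem \ref{thmLetter:mainthm} with the simpler ``sum over all elements'' step, and then invoking the polyhedral-to-$\N$-rational machinery as above.
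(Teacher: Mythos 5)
Your proposal does not address the statement it is supposed to prove. The statement here is Proposition \ref{prop:polyhedral sets basic properties}, a purely combinatorial/arithmetic closure result about polyhedral subsets of $\Z^k$: that Boolean combinations, products, and images and preimages under integral affine maps of polyhedral sets are again polyhedral and effectively constructible. What you have written instead is a sketch of the proof of the theorem on the relative growth series of a twisted conjugacy class (the second lettered theorem of the paper): you decompose $[g]_\varphi$ over the cosets of the finite-index subgroup, intersect with a weight-partition, and sum $\N$-rational pieces. None of that bears on the proposition at hand. Worse, your argument explicitly invokes ``By Proposition \ref{prop:polyhedral sets basic properties}, the sets $R_{i,j}:=P_i\cap Q_j^{(i)}$ are again effectively constructible polyhedral subsets'' --- so even read charitably, you are assuming the very statement you were asked to prove, which is circular.

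A proof of the actual proposition has to work directly with Definition \ref{def:PolSets}. For part $(i)$, closure under intersection and union is immediate from the definition by distributing finite intersections over finite unions; the nontrivial point is complementation (hence $X\setminus Y$), which requires showing that the complement of each of the three kinds of elementary set is a finite union of basic polyhedral sets (e.g.\ the complement of $\{z : u\cdot z>a\}$ is $\{z: u\cdot z=a\}\cup\{z: -u\cdot z>-a\}$, and the complement of a congruence condition is a finite union of the other residue classes), then applying De Morgan. Part $(ii)$ follows by padding the defining vectors $u$ with zeros in the extra coordinates. Part $(iii)$ is the genuinely substantial item --- preimages are easy (substitute $\psi$ into each linear form), but images under affine maps require an argument via the equivalence with semilinear or Presburger-definable sets, or Benson's direct construction. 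The paper itself does not prove any of this; it cites \cite{Benson} and \cite{Silva} for the proofs. Your proposal supplies none of these ingredients.
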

The key feature of polyhedral sets for our purposes is that their growth series, with respect to the $\ell_1$ distance, are rational functions. 
\begin{theorem}[See \cite{Benson}]\label{thm:polyrational}
    Let $X\subset\N^k$, for $k\in \N_+$, be a polyhedral set endowed with the $\ell_1$ metric. Then its growth series
    \[\sum_{n=0}^\infty \#\{g\in X : |g|_{\ell_1}\leq n\}z^n\]
    is a rational function.
\end{theorem}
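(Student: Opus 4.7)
The idea is to reduce the problem to pieces where the generating function can be written down explicitly. First I would pass to basic polyhedral sets: since $X$ is a finite union of basic polyhedral sets $X_1, \ldots, X_m$ and any intersection of basic polyhedral sets is again basic polyhedral (Proposition \ref{prop:polyhedral sets basic properties}(i)), inclusion--exclusion gives
\[\sum_{g \in X} z^{|g|_{\ell_1}} = \sum_{\emptyset \neq S \subset \{1,\ldots,m\}} (-1)^{|S|+1}\sum_{g \in \bigcap_{i \in S} X_i} z^{|g|_{\ell_1}}.\]
Since the cumulative series in the statement of the theorem equals $(1-z)^{-1}\sum_{g \in X} z^{|g|_{\ell_1}}$, it suffices to prove that the non-cumulative generating function $\sum_{g\in Y} z^{|g|_{\ell_1}}$ of any basic polyhedral set $Y\subset\N^k$ is rational.

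Next I would eliminate the congruences. Each condition $u\cdot z \equiv a \pmod{b}$ coming from a type-(2) elementary set can be removed by partitioning $\N^k$ according to the residues of the coordinates modulo $b$: writing $g_i = b q_i + r_i$ with $0 \leq r_i < b$ and $q_i \in \N$, the congruence becomes a condition on the residue vector $r$, while the remaining linear (in)equalities become linear conditions on $q$. Since $|g|_{\ell_1} = b|q|_{\ell_1} + |r|_{\ell_1}$ and rationality is preserved by the substitution $z \mapsto z^b$, the problem reduces to proving rationality for basic polyhedral sets defined purely by linear equations and strict inequalities, i.e., for the lattice points $P \cap \Z^k$ in a rational polyhedron $P \subset \mathbb{R}^k_{\geq 0}$.

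Finally, I would apply a Stanley-type decomposition theorem: the lattice points in any rational polyhedron $P \subset \mathbb{R}^k_{\geq 0}$ can be written as a finite disjoint union
\[P \cap \Z^k = \bigsqcup_{j=1}^{N} \Bigl( v_j + \sum_{i=1}^{r_j} \N\, p_{j,i} \Bigr),\]
with $v_j \in \N^k$ and linearly independent $p_{j,i} \in \N^k$, each element in the union being represented uniquely. On each piece the $\ell_1$-generating function is the manifestly rational expression
\[z^{|v_j|_{\ell_1}} \prod_{i=1}^{r_j} \frac{1}{1 - z^{|p_{j,i}|_{\ell_1}}}.\]
Summing over the finitely many pieces, combining with the earlier reductions, and multiplying by $(1-z)^{-1}$ yields the desired rationality. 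The key non-trivial step, and the main obstacle, is the existence of the disjoint Stanley decomposition; equivalent formulations in terms of semilinear sets (Ginsburg--Spanier) or Presburger-definable sets are available and would work equally well. An alternative, more self-contained route proceeds by induction on $k$: one projects $Y$ onto a coordinate, identifies the fibers as polyhedral sets of dimension $k-1$ whose structure is eventually periodic in the projected coordinate, and assembles the series from the resulting rational pieces.
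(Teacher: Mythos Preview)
The paper does not give its own proof of this theorem; it is quoted from Benson with a citation, and the surrounding text treats it as background. So there is no in-paper argument to compare against.

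Your sketch is sound and is in fact close in spirit to Benson's original argument: reduce to basic polyhedral sets by inclusion--exclusion, strip out the congruence conditions by partitioning $\N^k$ into residue classes (one should take the lcm of all moduli appearing, not a single $b$, but this is a cosmetic fix), and then exhibit the generating function of the remaining polyhedron explicitly via a simplicial/Stanley-type decomposition into translated free commutative monoids. The only substantive step you defer is the existence of such a disjoint decomposition; you correctly flag this as the main input and point to the equivalent semilinear/Presburger formulations, any of which suffices. Benson carries out this decomposition by hand rather than citing Stanley, but the structure of the argument is the same. Your alternative inductive route via projection and eventual periodicity of fibers is also viable and appears in some treatments of semilinear sets.
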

In fact, the growth series turn out to be a particularly restricted form of rational function, which arises in formal language theory. See \cite{Nrational} and the references therein, and \cite[Section II]{SS}.
\begin{definition}[$\N$-rational functions]\label{def:Nrational}
   The set of \emph{$\N$-rational functions} is the smallest class $R$ of generating functions in one variable $z$, satisfying
    \begin{enumerate}
        \item $0$ and $z$ are in $R$,
        \item if $f(z)$ and $g(z)$ are in $R$, then so too are $f(z)+g(z)$ and $f(z)g(z)$,
        \item if $h(z)\in R$ is such that $h(0)=0$, then $\frac{1}{1-h(z)}\in R$.
    \end{enumerate}
\end{definition}

\begin{definition}[Explicitly computable rational function]\label{def:expcomp}
    We will say that a rational function $p(z)/q(z)$ is \emph{explicitly computable}, with respect to some description, if there exists an algorithm taking the description as input and outputting the two polynomials $p(z)$ and $q(z)$.
\end{definition}
Again, we make no claims regarding the complexity of the algorithm, merely that it exists. With these definitions, Theorem \ref{thm:polyrational} can be refined as follows.
\begin{proposition}[See {\cite[Proposition 4.14]{CEL}}]\label{prop:Nrational}
    Let $X\subset\Z^k$, for $k\in \N_+$, be a polyhedral set endowed with the $\ell_1$ metric. Then its growth series is $\N$-rational and can be explicitly computed from its description as a polyhedral set.
\end{proposition}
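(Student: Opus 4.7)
The plan is to combine the parametrization of geodesic coset representatives from Section \ref{sec:coset} with the structural description of twisted conjugacy classes developed for Theorem \ref{thmLetter:mainthm} in Section \ref{sec:main}, and then invoke Proposition \ref{prop:Nrational}. First, I would fix a finite-index normal free abelian subgroup $A\cong \Z^n\trianglelefteq G$ with coset representatives $t_1,\ldots,t_m$, giving $G=\bigsqcup_{i=1}^m t_iA$. Section \ref{sec:coset} supplies, for each $i$, an effectively constructible polyhedral set $Q_i\subset \Z^n$ and an integral affine bijection $\pi_i\colon Q_i\to t_iA$ under which $\omega_\Sigma(\pi_i(x))$ equals the $\ell_1$-norm of $x$ plus a constant depending only on $i$.

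Next, for the fixed class $[g]_\varphi$, I would analyse each slice $[g]_\varphi\cap t_iA$. In the purely abelian case, writing the operation additively, one has $cg\varphi(c)^{-1}=g+(\Id-\varphi)(c)$, so $[g]_\varphi=g+\Image(\Id-\varphi)$ is a coset of a subgroup of $\Z^n$ and is thus visibly polyhedral. In the virtually abelian case, the structural description from Section \ref{sec:main} shows that the conjugators producing elements of $[g]_\varphi\cap t_iA$ form a finite union of cosets of an explicit sublattice of $A$, determined by how $\varphi$ interacts with the finite quotient $G/A$ and cut out by finitely many linear equations and congruences. Pulling back through $\pi_i$, the set $\pi_i^{-1}([g]_\varphi\cap t_iA)$ is therefore a polyhedral subset of $Q_i$, effectively constructible from a description of $G$, $\varphi$, and $g$ via Proposition \ref{prop:polyhedral sets basic properties}.

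Applying Proposition \ref{prop:Nrational} to each of the $m$ polyhedral sets yields $\N$-rational growth series with respect to the $\ell_1$-norm. Multiplying each by the appropriate monomial $z^{c_i}$ to compensate for the additive weight shift on the $i$-th coset, and summing over $i$, produces the relative growth series of $[g]_\varphi$ as an $\N$-rational function, since sums and products of $\N$-rational series are $\N$-rational by Definition \ref{def:Nrational}. Explicit computability then follows because every step in this chain — the polyhedral parametrisations $\pi_i$, the polyhedral description of the slice, and the application of Proposition \ref{prop:Nrational} — is algorithmic given a description of $G$.

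The main obstacle is the polyhedral description of $[g]_\varphi\cap t_iA$ in $\Z^n$-coordinates. Twisted conjugators move elements between cosets of $A$, and $\varphi$ need not respect the coset decomposition, so one must carry out a careful bookkeeping across the finite set $G/A$ and track how $\varphi$ interacts with it. Once the analysis of Section \ref{sec:main} carried out for Theorem \ref{thmLetter:mainthm} is in hand, however, each slice presents as a finite union of affine translates of sublattices of $\Z^n$, which is exactly the form required by Definition \ref{def:PolSets}.
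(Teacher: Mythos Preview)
Your proposal does not prove Proposition~\ref{prop:Nrational} at all: it is a sketch of a proof of Theorem~B (the $\N$-rationality of the relative growth series of a single twisted conjugacy class), not of the proposition in question. Indeed, you explicitly \emph{invoke} Proposition~\ref{prop:Nrational} in the third paragraph as a black box, so as an argument for Proposition~\ref{prop:Nrational} itself it would be circular. Proposition~\ref{prop:Nrational} is a purely combinatorial statement about polyhedral subsets of $\Z^k$ and the $\ell_1$ metric; groups, endomorphisms, and twisted conjugacy classes play no role in it. The paper does not supply its own proof of this proposition but cites it from \cite{CEL}; a genuine proof would proceed by decomposing a polyhedral set into disjoint basic pieces and computing the $\ell_1$ growth series of each such piece directly (e.g.\ via a change of coordinates turning it into a product of arithmetic progressions, whose series are visibly $\N$-rational), then summing.

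As a secondary remark, even read as a proof of Theorem~B your outline is somewhat loose at the point where you assert an ``integral affine bijection $\pi_i\colon Q_i\to t_iA$ under which $\omega_\Sigma(\pi_i(x))$ equals the $\ell_1$-norm of $x$ plus a constant''. Section~\ref{sec:coset} does not quite give this: what one obtains is a finite family of patterned-word sets $W^\pi$ together with maps $\phi_\pi$ and affine weight functions as in Proposition~\ref{prop:representing words using matrices}, and one must choose geodesic representatives across patterns rather than a single bijection per $\Z^n$-coset. The paper's route to Theorem~B is cleaner: Lemma~\ref{lem:union tc class} shows each twisted conjugacy class is a finite union of cosets $H(tg\varphi(t)^{-1})\,tg\varphi(t)^{-1}$ of subgroups of $\Z^n$, hence a rational subset of $G$, and then one appeals to the (cited) fact that rational subsets of virtually abelian groups have explicitly computable $\N$-rational relative growth series.
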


\subsection{Describing virtually abelian groups}
This article asserts that the twisted conjugacy growth series of a finitely generated virtually abelian group can be explicitly computed. For this to make sense, we need to clearly define the input data.
\begin{remark}\label{fully characteristic Zn}
    Every finitely generated (infinite) virtually abelian group $G$ has a finite index normal subgroup isomorphic to $\Z^n$ (for some $n\in \N_+$) that is fully characteristic, i.e. this subgroup is invariant under every endomorphism of $G$ (see for example \cite[Remark 3.3]{DL}).
\end{remark}

 Following \cite{CEL}, we define a standard normal form.
\begin{definition}[Subgroup-transversal normal form]
    Let $G$ be a finitely generated virtually abelian group with fully characteristic finite index normal subgroup $\Z^n$. Choose a basis $B=\{e_1,\ldots,e_n\}$ for the subgroup $\Z^n$, and a transversal $T$ for its (right) cosets in $G$. The \emph{subgroup-transversal normal form} for $G$ is the set of words $(\{e_1\}^*\cup \{e_1^{-1}\}^*)\cdots(\{e_n\}^*\cup \{e_n^{-1}\}^*)T$.
\end{definition}
For a finite set $S$, we write $S^{\pm1}$ to denote the union of $S$ with the set of all inverses of elements of $S$.
\begin{definition}[Description of a virtually abelian group]\label{def:input data}
    Let $G$ be a finitely generated (infinite) virtually abelian group, and $\varphi$ some endomorphism of $G$. A \emph{description} of $G$ and $\varphi$ consists of
    \begin{enumerate}
        \item a fully characteristic finite index normal subgroup $\Z^n$ together with a $\Z$-basis $B=\{e_1,\ldots,e_n\}$,
        \item a choice of transversal $T\subset G$ for the cosets of $\Z^n$ (where we assume that $1_G\in T$),
        \item a function $f\colon (B^{\pm1}\cup T)\times (B^{\pm1}\cup T)\to (B^{\pm1})^*T$ that maps a pair $(a,b)$ to the subgroup-transversal normal form of the product $\overline{ab}$,
        \item the images $\varphi(g)$, for each $g\in B^{\pm1}\cup T$, given in subgroup-transversal normal form,
        \item a finite set $\Sigma$ of generators for $G$ (generating $G$ as a monoid), each given as a word in $(B^{\pm1}\cup T)^*$,
        \item a weight function $\omega_\Sigma\colon \Sigma\to\N_+$.
    \end{enumerate}
\end{definition}

\section{Effective construction of geodesic coset representatives}\label{sec:coset}
For a finitely generated virtually abelian group $G$, with any choice of finite generating set, Benson \cite{Benson} proved that there exists a set of unique geodesic representatives for the elements of $G$, which can be counted using finitely many polyhedral sets, implying that the standard growth series of $G$ is rational. In \cite{CEL}, it is shown that the proof can be made effective, and so the growth series can be explicitly calculated. In \cite{Evetts}, the first named author generalised Benson's result to obtain unique geodesic representatives for the cosets of any subgroup of $G$, yielding rational coset growth series. In this section, we make this latter proof effective (see Theorem \ref{thm:cosetreps} and its proof), which is a key step in proving our main Theorem \ref{thmLetter:mainthm}. We closely follow \cite[Section 4]{CEL}.

\begin{definition}[Extended finite generating set]\label{def:extendedgenset}
    Let $G$ be virtually abelian, with a finite weighted generating set $\Sigma$ and fully characteristic finite index normal subgroup $\Z^n$ for some $n\in \N_+$.
    \begin{enumerate}
        \item Define
    \[S=\{\overline{s_1\cdots s_r} : s_i\in\Sigma, 1\leq r\leq [G:\Z^n]\}.\] 
    We call $S$ the \emph{extended finite generating set} of $G$ with respect to $\Sigma$. 
        \item If $\omega_\Sigma\colon\Sigma\to\N_+$ is the weight function on $\Sigma$, define a weight function on $S$ by $\omega_S(s)=\omega_\Sigma(s)$, for each $s\in S$.
    \end{enumerate}
\end{definition}

\begin{remark}
    The extended generating set $S$ is also a finite (monoid) generating set for $G$, and
    \[\omega_\Sigma(g)=\min\{\omega_\Sigma(v): v\in\Sigma^*, \overline{v}=g\} = \min\{\omega_S(u): u\in S^*, \overline{u}=g\} = \omega_S(g)\]
    for every $g\in G$. Thus, changing the generating set has not changed the weight of an element. Since there is no ambiguity, we use $\omega(g)$ for the weight of $g\in G$ from now on.
\end{remark}

\begin{lemma}[See {\cite[Remark 11.2]{Benson}}]\label{lem:pigeon}
    Let $G$ be finitely generated virtually abelian with $\Z^n$ as a fully characteristic finite index normal subgroup. Then every (non-empty) product $g_1\cdots g_r$ of elements of $G$ with $r\geq[G:\Z^n]$ contains a subproduct $g_ig_{i+1}\cdots g_{i+j}\in \Z^n$.
\end{lemma}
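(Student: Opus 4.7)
The plan is to prove Lemma \ref{lem:pigeon} by a direct pigeonhole argument applied to the sequence of partial products. Write $N=[G:\Z^n]$ and, given the product $g_1g_2\cdots g_r$ with $r\geq N$, consider the $r+1$ partial products
\[ p_0=1_G,\qquad p_k=g_1g_2\cdots g_k\ \text{for}\ 1\leq k\leq r. \]
Since $\Z^n$ is normal in $G$, the quotient $G/\Z^n$ is a group of order exactly $N$, and reduction modulo $\Z^n$ gives a map from the set $\{p_0,p_1,\ldots,p_r\}$ into $G/\Z^n$.

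Because $r+1\geq N+1$, the pigeonhole principle yields indices $0\leq i<j\leq r$ with $p_i\Z^n=p_j\Z^n$, equivalently $p_i^{-1}p_j\in\Z^n$. Expanding,
\[ p_i^{-1}p_j=(g_1\cdots g_i)^{-1}(g_1\cdots g_j)=g_{i+1}g_{i+2}\cdots g_j, \]
which is a (non-empty, since $i<j$) contiguous subproduct of the original product and lies in $\Z^n$. Relabelling with the lemma's indexing (taking the starting index $i+1$ and length $j-i$) gives the stated conclusion.

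There is essentially no obstacle here beyond bookkeeping: the only step to be slightly careful with is that we list $r+1$ partial products (including the empty one $p_0=1_G$) rather than $r$, which is what makes the strict inequality in pigeonhole go through under the hypothesis $r\geq N$ rather than $r>N$.
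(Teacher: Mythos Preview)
Your proof is correct and follows essentially the same pigeonhole argument on prefix products as the paper. Your inclusion of the empty product $p_0=1_G$ is a slight streamlining: it lets you handle the boundary case $r=[G:\Z^n]$ uniformly, whereas the paper lists only the $r$ nonempty prefixes and treats $r=[G:\Z^n]$ separately by noting that if those prefixes occupy distinct cosets then one of them already lies in $\Z^n$.
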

\begin{proof}
    Consider the prefix subproducts $g_1, g_1g_2, \ldots, g_1\cdots g_r$. If $r>[G:\Z^n]$ then the pigeonhole principle ensures that two such subproducts represent elements of the same $\Z^n$-coset, say $g_1g_2\cdots g_{i-1}$ and $g_1g_2\cdots g_{i+j}$ for some $i,j$. Then
    \[\left(g_1g_2\cdots g_{i-1}\right)^{-1}g_1g_2\cdots g_{i+j} = g_ig_{i+1}\cdots g_{i+j}\in\Z^n.\]
    In the case $r=[G:\Z^n]$, either there is a pair of subproducts in the same coset, or else each subproduct is in a different coset, and in particular one of them is contained in the identity coset $\Z^n$.
\end{proof}

\begin{definition}[Patterns]\label{def:patterns}
    Let $G$ be a virtually abelian group, $S$ an extended finite generating set of $G$ as in Definition \ref{def:extendedgenset} and $\Z^n$ a fully characteristic finite index normal subgroup.
    \begin{enumerate}
        \item Let $X=S\cap\Z^n=\{x_1,\ldots,x_r\}$ and $Y=S\setminus X=\{y_1,\ldots,y_s\}$. Define $P=\{\pi\in Y^* : |\pi|_S\leq[G:\Z^n]\}$ to be the (finite) set of \emph{patterns}, that is, all words in $Y^*$ of length at most the index $[G:\Z^n]$.
        \item For each $\pi=y_{i_1}\cdots y_{i_k}\in P$ with $|\pi|_S=k$, define the set $W^\pi$ of $\pi$-patterned words in $S^*$ as
        \[\left\{x_1^{w_1}\cdots x_r^{w_r} y_{i_1} x_1^{w_{r+1}}\cdots x_r^{w_{2r}}y_{i_2} x_1^{w_{2r+1}}\cdots y_{i_k} x_1^{w_{kr+1}}\cdots x_r^{w_{kr+r}} : \begin{array}{l}
            w_i\in\N\text{, for} \\
            1\leq i\leq kr+r
        \end{array}\right\}.\]
        \item For each $\pi\in P$ with $|\pi|_S=k$, define the bijection $\phi_\pi\colon W^\pi\to\N^{kr+r}$ recording the powers of the generators $x_i$
        \[\phi_\pi\colon x_1^{w_1}\cdots x_r^{w_r}y_{i_1} x_1^{w_{r+1}}\cdots x_r^{w_{2r}}y_{i_2} x_1^{w_{2r+1}}\cdots y_{i_k} x_1^{w_{kr+1}}\cdots x_r^{w_{kr+r}} \mapsto \begin{pmatrix} w_1 \\ w_2 \\ \vdots \\ w_{kr+r} \end{pmatrix}.\]
        We write $m_\pi = kr+r = (|\pi|_S+1)r$.
    \end{enumerate}
\end{definition}
By the following proposition, we only need to consider the finitely many patterns in $P$ and their corresponding sets of patterned words to represent geodesics in our group.
\begin{proposition}[See {\cite[Proposition 11.3]{Benson}}]\label{prop:P patterns}
    For each element $g\in G$, there exists some $\pi\in P$ such that $W^\pi$ contains a geodesic representative for $g$.
\end{proposition}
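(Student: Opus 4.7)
The plan is to modify an arbitrary geodesic of $g$ in $S^*$ so that it lies in some $W^\pi$ with $\pi\in P$. This is achieved in two reductions: canonicalising the $X$-blocks, and bounding the number of $Y$-letters by $[G:\Z^n]$.

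First, take any geodesic $w\in S^*$ of $g$ and factor it as $w=A_0\, y_{i_1}\, A_1\cdots y_{i_k}\, A_k$, where each $A_j\in X^*$ is a (possibly empty) maximal block of $X$-letters and each $y_{i_j}$ lies in $Y$. All letters of any $A_j$ belong to the abelian subgroup $\Z^n$, so they commute pairwise, and reordering the letters of each $A_j$ into the canonical form $x_1^{a_1^{(j)}}x_2^{a_2^{(j)}}\cdots x_r^{a_r^{(j)}}$ preserves the product, the length and the total weight. Setting $\pi' = y_{i_1}\cdots y_{i_k}$, the resulting geodesic lies in $W^{\pi'}$.

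Second, to ensure $|\pi'|_S\leq[G:\Z^n]$, I would induct on $\omega(g)$. The base case $\omega(g)\leq[G:\Z^n]$ is immediate, as every geodesic then has at most $[G:\Z^n]$ letters of $S$, hence at most $[G:\Z^n]$ $Y$-letters. For the inductive step, suppose $k>[G:\Z^n]$. Applying pigeonhole to the $[G:\Z^n]+1$ cosets $\overline{1},\overline{y_{i_1}},\ldots,\overline{y_{i_1}\cdots y_{i_{[G:\Z^n]}}}$ in $G/\Z^n$ yields indices $0\leq j_1<j_2\leq[G:\Z^n]$ with $\overline{y_{i_{j_1+1}}\cdots y_{i_{j_2}}}\in\Z^n$. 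Let $w_{\mathrm{mid}}$ be the contiguous subword of $w$ running from $A_{j_1}$ through $y_{i_{j_2}}$ (including all interleaved $X$-blocks); by normality of $\Z^n$ its product $z:=\overline{w_{\mathrm{mid}}}$ lies in $\Z^n$. Since $j_2\leq[G:\Z^n]<k$, $w_{\mathrm{mid}}$ is a proper subword of $w$, so $\omega(z)\leq\omega(w_{\mathrm{mid}})<\omega(g)$, and the inductive hypothesis provides a geodesic $w''$ of $z$ lying in some $W^{\pi''}$ with $|\pi''|_S\leq[G:\Z^n]$.

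The main obstacle is the substitution step that closes the induction. Replacing $w_{\mathrm{mid}}$ by $w''$ in $w$ yields another geodesic $w'=Pw''Q$ of $g$, but its total $Y$-count becomes $k-(j_2-j_1)+|w''|_Y$, which is neither obviously at most $[G:\Z^n]$ nor obviously strictly less than $k$, so a naive iteration need not terminate. The natural remedy is to select $w$ to minimise the number of $Y$-letters among all geodesics of $g$ and then sharpen the pigeonhole step---for instance, by picking $(j_1,j_2)$ so that the inductively-supplied $w''$ has strictly fewer $Y$-letters than $w_{\mathrm{mid}}$---thereby contradicting the minimality of $w$. This delicate final step is where Benson's original argument \cite[Proposition~11.3]{Benson} concentrates its care.
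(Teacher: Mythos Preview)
The paper does not give its own proof; it cites Benson's Proposition~11.3 and moves on. Judged on its own, your first reduction (commuting the $X$-letters in each block into canonical order) is correct, but the second reduction is left open, as you yourself flag. Your induction on $\omega(g)$ furnishes a patterned geodesic $w''\in W^{\pi''}$ for $z=\overline{w_{\mathrm{mid}}}\in\Z^n$ with $|\pi''|_S\le[G:\Z^n]$, but the substituted word has $Y$-count $k-(j_2-j_1)+|\pi''|_S$, and nothing forces this below $k$, let alone below $[G:\Z^n]$. The remedy you sketch---minimise the $Y$-count of $w$ and choose $(j_1,j_2)$ so that $w''$ has strictly fewer $Y$-letters than $w_{\mathrm{mid}}$---is not carried out, and cannot be taken for granted: elements of $\Z^n$ may require $Y$-letters in \emph{every} $S$-geodesic (for instance $e_2^m$, $m$ large, in $\Z^2\rtimes\Z/4$ with $\Sigma=\{e_1^{\pm1},t^{\pm1}\}$ needs at least two), so there is no a~priori reason the inductively supplied $w''$ should beat $j_2-j_1$. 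Deferring the closure of this step to Benson means your proposal is an outline rather than a proof.

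The ingredient your outline does not exploit is the very definition of the extended generating set: every product of at most $[G:\Z^n]$ letters of $\Sigma$ is already a single element of $S$. Benson starts from a $\Sigma$-geodesic rather than an $S$-geodesic, and uses Lemma~\ref{lem:pigeon} to partition it into consecutive $\Sigma$-blocks of length at most $[G:\Z^n]$---each therefore a single $S$-letter of the same weight---arranged so that at most $[G:\Z^n]$ of them fall outside $\Z^n$. This direct, weight-preserving construction bypasses the termination problem that your induction on $\omega(g)$ runs into.
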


A non-effective version of the next result appears in the proof of \cite[Theorem 4.2]{Evetts}. In the rest of this section, we explain how to make the result effective and hence derive the following result.
\begin{theorem}\label{thm:cosetreps}
    Let $G$ be a virtually abelian group with finite generating set $\Sigma$ and a fully characteristic finite index normal subgroup $\Z^n$. Let $H$ be a subgroup contained in $\Z^n$ (given by a basis written in terms of the standard basis elements of $\Z^n$). Then for each pattern $\pi\in P$, there exists a set of geodesics $U^{\pi}_H\subset W^\pi\subset \Sigma^*$ such that $\phi_\pi(U^{\pi}_H)\subset\N^{m_\pi}$ is an effectively constructible polyhedral set, and the disjoint union $\bigcup_{\pi\in P}U^{\pi}_H$ comprises a complete set of unique representatives for the right cosets $G/H$.
\end{theorem}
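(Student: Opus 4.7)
The plan is to adapt the Benson polyhedral geodesic-selection technique, as in \cite{Benson, CEL, Evetts}, to the coset setting, by encoding the ``same $H$-coset'' relation as an effectively constructible polyhedral condition on pairs of pattern-vectors and then extracting unique geodesics via polyhedral projections and a lexicographic tie-break.

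First I would attach to each pattern $\pi\in P$ two integer-affine gadgets that are computable directly from the description of $G$. Taking a $\pi$-patterned word $w$ with $\phi_\pi(w)=(w_1,\dots,w_{m_\pi})$ and pushing the $y_{i_j}$-letters to the right using that $\Z^n\trianglelefteq G$ and that conjugation of $\Z^n$ by any $g\in G$ is $\Z$-linear, one obtains $\overline{w}=\psi_\pi(\phi_\pi(w))\cdot t_\pi$, where $t_\pi\in T$ depends only on $\pi$ and $\psi_\pi\colon\N^{m_\pi}\to\Z^n$ is an integral affine map; similarly the weight $\omega(\overline{w})$ is an integer-linear map $\ell_\pi\colon\N^{m_\pi}\to\N$. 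Both are explicitly computable from the multiplication table $f$ and the basis $B$.

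Next I would characterise the same-$H$-coset relation. Since $H\subset\Z^n$, patterned words with $t_\pi\ne t_{\pi'}$ live in different $\Z^n$-cosets and so in different $H$-cosets, while when $t_\pi=t_{\pi'}=t$ a short calculation using normality gives $\overline{w}H=\overline{w'}H$ iff
\[\psi_{\pi'}(\phi_{\pi'}(w'))-\psi_\pi(\phi_\pi(w))\in A_t(H),\]
where $A_t\colon\Z^n\to\Z^n$ is the conjugation-by-$t$ automorphism. The subgroup $A_t(H)\subset\Z^n$ is effectively constructible polyhedral (e.g.\ as the image of $\Z^{\mathrm{rk}(H)}$ under an explicit integer linear map), so by Proposition~\ref{prop:polyhedral sets basic properties} the same-$H$-coset relation $R_{\pi,\pi'}\subset\N^{m_\pi}\times\N^{m_{\pi'}}$ is an effectively constructible polyhedral set.

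The set $U^\pi_H$ is then produced by a two-stage polyhedral operation. For each $\pi$ set
\[G^\pi_H=\N^{m_\pi}\setminus\bigcup_{\pi'\in P}\mathrm{pr}_1\Bigl(R_{\pi,\pi'}\cap\bigl\{(v,v')\colon \ell_{\pi'}(v')<\ell_\pi(v)\bigr\}\Bigr),\]
a polyhedral set consisting of vectors whose $\pi$-patterned words are geodesic among all patterned representatives of their $H$-coset. Fixing a total order on $P$, the lexicographic order on each $\N^{m_\pi}$, and the induced total order $\prec$ on $\bigsqcup_\pi\{\pi\}\times\N^{m_\pi}$, define $\phi_\pi(U^\pi_H)$ to be $G^\pi_H$ minus $\bigcup_{\pi'\in P}\mathrm{pr}_1\bigl(R_{\pi,\pi'}\cap(G^\pi_H\times G^{\pi'}_H)\cap\{(v,v')\colon \ell_{\pi'}(v')=\ell_\pi(v),\,(\pi',v')\prec(\pi,v)\}\bigr)$. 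Since lexicographic comparisons decompose into finitely many conjunctions of linear equalities and strict inequalities, Proposition~\ref{prop:polyhedral sets basic properties} again yields effective polyhedrality, and by construction $\bigsqcup_{\pi}U^\pi_H$ is a set of unique geodesic representatives for $G/H$. The main obstacle is the explicit derivation of $\psi_\pi$ from the description of $G$: one must iterate the subgroup-transversal normal form $f$ through the letters of a $\pi$-patterned word, tracking how each exponent variable $w_i$ transforms under the (finitely many) $n\times n$ integer conjugation matrices induced by transversal elements. This bookkeeping is finite but intricate; once done, every subsequent step is a direct application of Proposition~\ref{prop:polyhedral sets basic properties}.
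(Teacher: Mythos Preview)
Your approach is correct and follows the same essential strategy as the paper: encode the same-$H$-coset relation and the word weight as effectively constructible polyhedral conditions via the affine maps $\psi_\pi,\ell_\pi$ (the paper's $A_i^\pi, B_i^\pi$ of Proposition~\ref{prop:representing words using matrices}), then use polyhedral projections and a tie-break to extract unique geodesics. The organizational difference is that the paper first selects a unique representative \emph{within} each $W^\pi$ using the partial order $\leq_{H,\pi}$ of Definition~\ref{def:order}, obtaining intermediate sets $V_H^\pi$, and only afterwards removes redundancies \emph{between} patterns via the sets $R_>^{\pi,\mu}$ and $R_=^{\pi,\mu}$; you fold both stages into a single global selection over $\bigsqcup_\pi \{\pi\}\times\N^{m_\pi}$. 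Both work, and yours is arguably more streamlined; the paper's two-stage structure is inherited from \cite{Evetts}. Two small slips to correct: your $\ell_\pi$ should record the \emph{word} weight $\omega(w)$, which is affine in $\phi_\pi(w)$, not the element weight $\omega(\overline{w})$ (these agree only for geodesics, which is what you ultimately need); and your coset criterion involving $A_t(H)$ is the correct one for the left cosets $\overline{w}H$ you wrote, whereas the paper throughout works with $H\overline{w}$, for which the condition is simply $\psi_{\pi'}(v')-\psi_\pi(v)\in H$ with no conjugation --- either convention suffices for the theorem.
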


\begin{remark}
    If $w\in W^\pi$, then since $\Z^n$ is normal in $G$, the coset $\Z^n\overline{w}$ represented by the element $\overline{w}$ depends only on the pattern $\pi$. 
\end{remark}

\begin{notation}
    Recall from Definition \ref{def:input data} that we use $T$ to denote a fixed transversal for the cosets of $\Z^n$ in $G$. For each $\pi\in P$, define $t_\pi\in T$ by requiring that
\[\overline{\pi}\in\Z^nt_\pi.\]
\end{notation}

We now demonstrate how to pass between a patterned word and the subgroup-transversal normal form of the element it represents. The following proposition is a restatement of \cite[Definition 2.10]{Evetts}. We include a proof for completeness, and to emphasise the fact that the process is effective.
\begin{notation}
    For $h\in G$ we denote by
    \[\tau_h:G\to G:g\mapsto hgh^{-1}\]
    the inner automorphism of $G$ given by conjugation by $h$.\\
    Recall that we fixed a basis $\{e_1,\dots,e_n\}$ of $\Z^n$ as input data (see Definition \ref{def:input data}). We identify every element of $\Z^n$ with its coordinates with respect to this basis. Moreover, we use additive notation in $\Z^n$ in parallel to the multiplicative notation used for $G$. For example, for $e_1^{a_1}\dots e_n^{a_n}, e_1^{b_1}\dots e_n^{b_n}\in \Z^n$ and $t\in T$, we use
    \[\left(\begin{pmatrix}a_1\\\vdots\\a_n\end{pmatrix}+\begin{pmatrix}b_1\\\vdots\\b_n\end{pmatrix}\right)t \quad \text{instead of} \quad e_1^{a_1}\dots e_n^{a_n}e_1^{b_1}\dots e_n^{b_n}t.\]
\end{notation}
\begin{proposition}\label{prop:representing words using matrices}
    Given a description of a virtually abelian group $G$ (as in Definition \ref{def:input data}), fix a pattern $\pi\in P$ and a coset representative $t\in T$. Then there are $s\in T$, vectors $A_{i,t}^\pi\in\Z^{m_\pi}$ and integers $B_{i,t}^\pi\in\Z$ (for $1\leq i \leq n$) such that for each $w\in W^\pi$ we have
    \begin{equation*}
        \tau_t(\overline{w}) = \left( \begin{pmatrix} A_{1,t}^\pi\cdot\phi_\pi(w) \\ \vdots \\ A_{n,t}^\pi\cdot\phi_\pi(w) \end{pmatrix} + \begin{pmatrix} B_{1,t}^\pi \\ \vdots \\ B_{n,t}^\pi \end{pmatrix} \right) s
        \quad\text{and}\quad
        \tau_t(\overline{\pi}) = \begin{pmatrix} B_{1,t}^\pi \\ \vdots \\ B_{n,t}^\pi \end{pmatrix} s.
    \end{equation*}
    
    Furthermore, we can find a vector $A_{n+1}^\pi\in\Z^n$ and integer $B_{n+1}^\pi\in\Z$ such that $\omega(w)=A_{n+1}^\pi\cdot\phi_\pi(w) + B_{n+1}^\pi$ for every $w\in W^\pi$.
\end{proposition}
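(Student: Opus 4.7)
The plan is to exploit the normality of $\Z^n$ in $G$ to ``collect'' every abelian generator $x_l\in X$ at the front of a patterned word, so that only the fixed suffix $\overline{\pi}$ remains on the right, and then to conjugate by $t$ and reduce the result to subgroup-transversal normal form. The key observation is that for each $y\in Y$, the inner automorphism $\tau_y$ preserves $\Z^n$ and, since $\Z^n$ is abelian, restricts to a $\Z$-linear automorphism $M_y\in \mathrm{GL}_n(\Z)$; similarly $\tau_t$ restricts to some $M_t\in \mathrm{GL}_n(\Z)$. From the description of $G$, each $M_y$ and $M_t$ is effectively constructible by applying the multiplication function $f$ to compute the images of the basis vectors $e_1,\ldots,e_n$.

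Next I would carry out the following bookkeeping. Given
\[
w = x_1^{w_1}\cdots x_r^{w_r}\, y_{i_1}\, x_1^{w_{r+1}}\cdots x_r^{w_{2r}}\, y_{i_2}\cdots y_{i_k}\, x_1^{w_{kr+1}}\cdots x_r^{w_{kr+r}} \in W^\pi,
\]
I repeatedly apply the identity $y\cdot x = \tau_y(x)\cdot y$ to commute each block of $x$'s past every preceding $y_{i_j}$. Identifying each $x_l$ with its coordinate vector in $\Z^n$ and using the convention that the empty product of matrices equals the identity, this produces $\overline{w} = v \cdot \overline{\pi}$ with
\[
v \;=\; \sum_{j=0}^{k}\, M_{y_{i_1}} M_{y_{i_2}} \cdots M_{y_{i_j}} \!\left(\sum_{l=1}^{r} w_{jr+l}\, x_l\right) \;\in\; \Z^n.
\]
Hence $v = A\,\phi_\pi(w)$ for an explicit integer $n\times m_\pi$ matrix $A$ whose columns are $M_{y_{i_1}}\cdots M_{y_{i_j}}\, x_l$ in the appropriate order. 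Reducing $\overline{\pi}$ to normal form (via iterated application of $f$) gives $\overline{\pi} = z_\pi\, t_\pi$ for some $z_\pi\in\Z^n$, so $\overline{w} = (A\,\phi_\pi(w) + z_\pi)\, t_\pi$.

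Conjugating by $t$, and writing $\tau_t(t_\pi) = t\, t_\pi\, t^{-1}$ in normal form as $\tilde z \cdot s$ with $\tilde z\in\Z^n$ and $s\in T$, I obtain
\[
\tau_t(\overline{w}) \;=\; \bigl( M_t A\,\phi_\pi(w) \;+\; M_t z_\pi + \tilde z\bigr)\cdot s,
\]
so that reading off the rows of $M_t A$ and the entries of $M_t z_\pi + \tilde z$ defines the required $A_{i,t}^\pi$ and $B_{i,t}^\pi$; the case $\phi_\pi(w) = 0$ (i.e.\ $w$ is the pattern word $\pi$ itself) recovers the second identity. The weight statement is even simpler: because $\omega$ is additive on words, $\omega(w) = \sum_{j=1}^{m_\pi} w_j\, \omega(x_{l(j)}) + \omega(\pi)$, which is immediately affine in $\phi_\pi(w)$ and yields $A_{n+1}^\pi$ and $B_{n+1}^\pi = \omega(\pi)$.

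I do not expect any conceptual obstacle: the argument is essentially bookkeeping, driven by normality of $\Z^n$ and the $\Z$-linearity of conjugation on it. The one point requiring care is the effectiveness claim, which reduces to checking that each ingredient — the matrices $M_y$ and $M_t$, the vector $z_\pi$, and the normal form of $t\,t_\pi\,t^{-1}$ — can be produced by finitely many applications of the multiplication function $f$ from Definition~\ref{def:input data}.
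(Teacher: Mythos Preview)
Your proposal is correct and follows essentially the same route as the paper: both arguments collect the abelian generators to one side via the conjugation action of the $y$'s (encoded as integer matrices), factor the result as a $\Z^n$-part times $\overline{\pi}$, then conjugate by $t$ and reduce $\tau_t(\overline{\pi})$ to normal form; the weight statement is handled identically. The only cosmetic difference is that the paper applies $\tau_t$ before splitting off the constant term (writing $\tau_t(\overline{\pi})=xs$ directly), whereas you first split $\overline{\pi}=z_\pi t_\pi$ and then conjugate, but this yields the same $A_{i,t}^\pi$ and $B_{i,t}^\pi$.
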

\begin{proof}
    Since $\tau_t$ (for each $t\in T$) is an automorphism of $\Z^n$ that we can explicitly describe via an integer-valued $n\times n$ matrix $T_t$ using the description of $G$, we obtain that $\tau_t(z)=T_tz$ for $z\in\Z^n$. Partitioning the generating set into $X$ and $Y$ as in Definition \ref{def:patterns}, we note that conjugation by $y\in Y$ also defines an automorphism of $\Z^n$, and we can again determine the corresponding matrix $T_y$ so that $\tau_y(z)=T_yz$, for every $z\in\Z^n$. Now write $\Delta$ for the $n\times r$ matrix whose columns are the representations of the elements of $X=\{x_1,\ldots,x_r\}\subset \Z^n$ in terms of the basis $\{e_1,\ldots,e_n\}$ of $\Z^n$, so that $x_1^{v_1}x_2^{v_2}\cdots x_r^{v_r} = \Delta\begin{pmatrix} v_1\\ \vdots\\v_r \end{pmatrix}\in\Z^n$ for every $v_i\in\Z$.

    Now for a $\pi$-patterned word $w\in W^{\pi}$, we have 
    \begin{align*}
        w = x_1^{w_1}\cdots x_r^{w_r}y_{i_1} x_1^{w_{r+1}}\cdots x_r^{w_{2r}}y_{i_2} x_1^{w_{2r+1}}\cdots y_{i_k}x_1^{w_{kr+1}}\cdots x_r^{w_{kr+r}},
    \end{align*}
    where $\pi=y_{i_1}\ldots y_{i_k}$ as above. We can push the elements $y_i$ to the right using the conjugation action, and express the result in terms of the matrices $T_t$, $T_y$, $\Delta$, yielding
    \begin{align*}
        \tau_t(\overline{w}) &=\tau_t\left(\left( \Delta\begin{pmatrix} w_1 \\ \vdots \\ w_r\end{pmatrix} + T_{y_{i_1}}\Delta\begin{pmatrix} w_{r+1} \\ \vdots \\ w_{2r}\end{pmatrix} + \cdots + T_{y_{i_1}}\cdots T_{y_{i_k}}\Delta\begin{pmatrix} w_{kr+1} \\ \vdots \\ w_{kr+r}\end{pmatrix}   \right)\overline{\pi}\right) \\
        &= \left( T_t\Delta\begin{pmatrix} w_1 \\ \vdots \\ w_r\end{pmatrix} + T_tT_{y_{i_1}}\Delta\begin{pmatrix} w_{r+1} \\ \vdots \\ w_{2r}\end{pmatrix} + \cdots + T_tT_{y_{i_1}}\cdots T_{y_{i_k}}\Delta\begin{pmatrix} w_{kr+1} \\ \vdots \\ w_{kr+r}\end{pmatrix}   \right)\tau_t(\overline{\pi})\\
        &=  \left(T_t\Delta~ \vert~ T_tT_{y_{i_1}}\Delta~ \vert \cdots \vert~ T_tT_{y_{i_1}}\cdots T_{y_{i_k}}\Delta \right)\begin{pmatrix} w_1 \\ \vdots \\ w_{kr+r} \end{pmatrix}\tau_t(\overline{\pi}) \\
        &= \begin{pmatrix} A^\pi_{1,t}\cdot\phi_\pi(w) \\ \vdots \\ A^\pi_{n,t}\cdot\phi_\pi(w) \end{pmatrix} \tau_t(\overline{\pi}),
    \end{align*}
    where in the last step we have written $A^\pi_{i,t}\in\Z^{m_\pi}$ for the transpose of the $i$th row of the $n\times m_\pi$ matrix $\left(T_t\Delta~ \vert~ T_tT_{y_{i_1}}\Delta~ \vert \cdots \vert~ T_tT_{y_{i_1}}\cdots T_{y_{i_k}}\Delta \right)$, and recalled the definition of $\phi_\pi(w)$.

    Now for each $\pi\in P$ and $t\in T$ we can find (using our description of $G$) an expression $xs$, with $x\in\Z^n$ and $s\in T$, such that $\tau_t(\overline{\pi})=xs$. Writing $B^\pi_{i,t}\in\Z$ for the $i$th entry of $x$, we have 
    \begin{equation*}
        \tau_t(\overline{w}) = \left( \begin{pmatrix} A_{1,t}^\pi\cdot\phi_\pi(w) \\ \vdots \\ A_{n,t}^\pi\cdot\phi_\pi(w) \end{pmatrix} + \begin{pmatrix} B_{1,t}^\pi \\ \vdots \\ B_{n,t}^\pi \end{pmatrix} \right) s,
    \end{equation*}
    as required.

    Finally, let $B^\pi_{n+1}=\omega(\pi)$ and let $A^\pi_{n+1}\in\Z^{m_\pi}$ be the vector recording the weights of the $r$ generators in $X$, repeated $k+1$ times.

    That is, the $i$th entry of $A_{n+1}^\pi$ is equal to $r$ if $i\bmod r=0$, and equal to $i\bmod r$ otherwise.
    These clearly satisfy $\omega(w)=A^\pi_{n+1}\cdot\phi_\pi(w)+B^\pi_{n+1}$, finishing the proof.
\end{proof}

\begin{notation}\label{not:representing words using matrices}
    We use the case $t=1_G$ more frequently, and for brevity will therefore write $A_i^\pi = A_{i,1_G}^\pi$ and $B_i^\pi = B_{i,1_G}^\pi$ (for $\pi\in P$ and $1\leq i \leq n$). To be explicit, in this case we have
    \begin{equation*}
        \overline{w} = \left( \begin{pmatrix} A_{1}^\pi\cdot\phi_\pi(w) \\ \vdots \\ A_{n}^\pi\cdot\phi_\pi(w) \end{pmatrix} + \begin{pmatrix} B_{1}^\pi \\ \vdots \\ B_{n}^\pi \end{pmatrix} \right) t_\pi
        \quad\text{and}\quad
        \overline{\pi} = \begin{pmatrix} B_{1}^\pi \\ \vdots \\ B_{n}^\pi \end{pmatrix} t_\pi.
    \end{equation*}
\end{notation}
The following straightforward result uses the vectors $A_i^\pi$ and $B_i^\pi$ to provide a criterion for two words representing elements of the same coset.
\begin{proposition}[See {\cite[Proposition 4.3]{Evetts}}]\label{prop:cosetcriteria}
    Let $v\in W^\pi$ and $w\in W^\mu$ for some $\pi,\mu\in P$. Suppose that $H$ is a subgroup of $G$ contained in $\Z^n$ (so $H$ is itself free abelian of rank $d$, say) and let $\{h_1,\ldots,h_d\}\subset\Z^n$ be a basis for $H$. Then $H\overline{v}=H\overline{w}$ if and only if
    \begin{enumerate}
        \item $t_\pi=t_\mu$, and
        \item there exist integers $\lambda_1,\ldots,\lambda_d$ such that
        \begin{equation*}
            A_i^\pi\cdot\phi_\pi(v)+B_i^\pi - A_i^\mu\cdot\phi_\mu(w) - B_i^\mu = e_i\cdot\sum_{j=1}^d\lambda_jh_j
        \end{equation*}
        for each $i\in\{1,\ldots,n\}$ (where $e_i$ denotes the $i$th standard basis vector of $\Z^n$ as above).
    \end{enumerate}
\end{proposition}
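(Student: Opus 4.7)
My approach is to apply Proposition \ref{prop:representing words using matrices} with $t=1_G$ to put both $\overline{v}$ and $\overline{w}$ into a convenient normal form, and then translate the right coset equality $H\overline{v}=H\overline{w}$, which is equivalent to $\overline{v}\overline{w}^{-1}\in H$, into a system of coordinate equations in $\Z^n$. Since $H\subseteq\Z^n$ is abelian, once the transversal parts of $\overline{v}$ and $\overline{w}$ are matched, membership in $H$ reduces to a linear algebra condition on coordinate vectors.

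Following Notation \ref{not:representing words using matrices}, I would write $\overline{v}=z_v t_\pi$ and $\overline{w}=z_w t_\mu$, where $z_v,z_w\in\Z^n$ have $i$-th coordinates $A_i^\pi\cdot\phi_\pi(v)+B_i^\pi$ and $A_i^\mu\cdot\phi_\mu(w)+B_i^\mu$ respectively. Then
\[\overline{v}\overline{w}^{-1}=z_v t_\pi t_\mu^{-1}z_w^{-1}.\]
Projecting to the quotient $G/\Z^n$, which is well-defined because $\Z^n$ is normal, the requirement $\overline{v}\overline{w}^{-1}\in H\subseteq\Z^n$ forces $\Z^n t_\pi=\Z^n t_\mu$, and since $T$ is a fixed transversal this yields $t_\pi=t_\mu$, giving condition (1).

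Assuming (1), the cancellation $t_\pi t_\pi^{-1}=1_G$ gives $\overline{v}\overline{w}^{-1}=z_v z_w^{-1}=z_v-z_w\in\Z^n$ in additive notation. This element lies in $H$ precisely when it can be written as $\sum_{j=1}^d\lambda_j h_j$ for some integers $\lambda_1,\dots,\lambda_d$; taking the dot product with $e_i$ extracts the $i$-th coordinate of this vector equation and recovers condition (2), while assembling all $n$ coordinate equations back into a vector identity reproduces $\overline{v}\overline{w}^{-1}\in H$, so $H\overline{v}=H\overline{w}$. The whole argument is a direct unwinding of coset equality through the normal form of Proposition \ref{prop:representing words using matrices}, so I do not anticipate a significant obstacle; the only delicate point is separating the transversal component from the $\Z^n$-component of $\overline{v}\overline{w}^{-1}$, which is exactly what makes condition (1) necessary before condition (2) can be meaningfully stated.
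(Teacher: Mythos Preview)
Your proof is correct and is exactly the natural argument: write $\overline{v}$ and $\overline{w}$ in subgroup-transversal normal form via Notation~\ref{not:representing words using matrices}, observe that $H\overline{v}=H\overline{w}$ forces equality of the transversal parts since $H\subseteq\Z^n$, and then read off condition~(2) coordinatewise. The paper itself does not supply a proof of this proposition, deferring instead to \cite[Proposition~4.3]{Evetts} and calling the result ``straightforward''; your argument is precisely the intended straightforward one.
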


\begin{definition}[Partial orders on patterned words]\label{def:order}
    Let $H$ be a subgroup of $G$ contained in $\Z^n$, with basis $\{h_1,\ldots,h_d\}$ as in Proposition \ref{prop:cosetcriteria}. For each $\pi\in P$, add standard basis vectors $A_{n+2}^\pi=e_1,\ldots,A_K^\pi=e_{m_\pi}\in\Z^{m_\pi}$ to the set $\{A_1^\pi,\ldots,A_{n+1}^\pi\}$ which ensures that the extended set $\{A_1^\pi,\ldots,A_K^\pi\}$ spans $\Z^{m_\pi}$. Define a relation $\leq_{H,\pi}$ on the words in $W^\pi$ as follows. We write $v_1\leq_{H,\pi}v_2$ if and only if either $v_1=v_2$ or there exist integers $\lambda_1,\ldots,\lambda_d$, and $i\in\{1,\ldots,K-n\}$, such that
    \begin{align*}
        A_k^\pi\cdot(\phi_\pi(v_1)-\phi_\pi(v_2)) &= e_k\cdot\sum_{j=1}^d\lambda_jh_j,  & \text{ for }k&\in\{1,\ldots,n\}, \\
        A_{k}^\pi\cdot(\phi_\pi(v_1)-\phi_\pi(v_2)) &=0,    &   \text{ for }k&\in\{n+1,\ldots,n+i-1\}, \text{and}\\
        A_{k}^\pi\cdot (\phi_\pi(v_1)-\phi_\pi(v_2)) & <0,  &   \text{ for }k&=n+i.
    \end{align*}
\end{definition}

\begin{remark}\label{rem:minreps}
    It is straightforward to verify that $\leq_{H,\pi}$ is a partial order, and moreover (via Proposition \ref{prop:cosetcriteria}) that if we restrict to words in $W^\pi$ representing a single $H$-coset, then $\leq_{H,\pi}$ is a total order and in fact a well-order (see \cite[Proposition 4.7]{Evetts} for details).

    Since $A_{n+1}^\pi\cdot\phi_\pi(v)$ is the weight of the word $v$, it follows immediately from Definition \ref{def:order} that if $v\in W^{\pi}$ is minimal (with respect to $\leq_{H,\pi}$) amongst all words in $W^\pi$ representing the coset $H\overline{v}$, then it has minimal weight amongst these words also, and is therefore a candidate to be a geodesic representative for that coset. This motivates the following definition.
\end{remark}

\begin{definition}[Minimal coset representatives]
    Let $V_H^\pi$ be the unique set of $\leq_{H,\pi}$-minimal $H$-coset representatives in $W^\pi$, that is,  \[V_H^\pi= \{v\in W^\pi : \text{if }w\in W^\pi\text{ and }\overline{v}\in H\overline{w}\text{ then }v\leq_{H,\pi} w\}.\]
\end{definition}
It is proved in \cite[Proposition 4.7]{Evetts} that each $\phi_\pi(V_H^\pi)$ is a polyhedral set. For the case where $H$ is the trivial subgroup, it is proved in \cite[Lemma 4.9]{CEL}, using other results of that article, that these polyhedral sets can be effectively constructed. This latter proof may be immediately generalised to incorporate our subgroup $H$, and here we simply state the result.
\begin{proposition}
    For each $\pi\in P$, $\phi_\pi(V_H^\pi)$ is an effectively constructible polyhedral set.
\end{proposition}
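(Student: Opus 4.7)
The plan is to mimic the argument of \cite[Proposition 4.7]{Evetts} which establishes polyhedrality, tracking that every intermediate object is effectively constructible from the input data, and then to invoke the effective closure properties in Proposition \ref{prop:polyhedral sets basic properties}.

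Fix $\pi \in P$. A word $v \in W^\pi$ fails to belong to $V_H^\pi$ precisely when there exist some $w \in W^\pi$, integers $\lambda_1, \ldots, \lambda_d$, and an index $i \in \{1, \ldots, K-n\}$ witnessing $w <_{H,\pi} v$ in the sense of Definition \ref{def:order}; note that by Proposition \ref{prop:cosetcriteria} the first $n$ equations of that definition already force $H\overline{w} = H\overline{v}$, so no additional coset condition need be imposed. I would encode this by defining, for each $i \in \{1, \ldots, K-n\}$, the set $T_{\pi,i} \subset \Z^{2m_\pi + d}$ cut out by the componentwise nonnegativity conditions on $a$ and $b$ (so that $(a,b)$ corresponds to a pair of $\pi$-patterned words $v,w$ via $\phi_\pi$), the $n$ linear equations
\begin{equation*}
A_k^\pi \cdot (b - a) = e_k \cdot \sum_{j=1}^d \lambda_j h_j, \quad k = 1, \ldots, n,
\end{equation*}
the $i - 1$ linear equations $A_{n+k}^\pi \cdot (b - a) = 0$ for $1 \leq k \leq i-1$, and the strict inequality $A_{n+i}^\pi \cdot (a - b) > 0$. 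Each of these is an elementary set in the sense of Definition \ref{def:PolSets}, and all defining vectors and integers are obtained algorithmically from the description of $G$ and the basis of $H$ via the construction in the proof of Proposition \ref{prop:representing words using matrices}. Hence each $T_{\pi,i}$ is basic polyhedral and effectively constructible, and the finite union $T_\pi = \bigcup_{i=1}^{K-n} T_{\pi,i}$ is effectively constructible by Proposition \ref{prop:polyhedral sets basic properties}(i).

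Projecting $T_\pi$ onto its first $m_\pi$ coordinates gives an effectively constructible polyhedral set $N_\pi \subset \N^{m_\pi}$, by Proposition \ref{prop:polyhedral sets basic properties}(iii). By construction $N_\pi = \phi_\pi(W^\pi \setminus V_H^\pi)$, so $\phi_\pi(V_H^\pi) = \N^{m_\pi} \setminus N_\pi$ is effectively constructible by Proposition \ref{prop:polyhedral sets basic properties}(i), as required. I do not expect any genuine obstacle: the only step requiring real care is the extraction of the linear data $A_k^\pi$ from the input, which is precisely the content of Proposition \ref{prop:representing words using matrices}. The mildly subtle point is that the certificates $\lambda_j$ range over $\Z$ rather than $\N$ and must be eliminated by projection, but this is exactly what Proposition \ref{prop:polyhedral sets basic properties}(iii) handles effectively.
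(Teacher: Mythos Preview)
Your argument is correct. You encode the failure of $\leq_{H,\pi}$-minimality as membership in a projection of an explicitly described basic polyhedral set in $\Z^{2m_\pi+d}$, then take the complement in $\N^{m_\pi}$; every step is effective by Proposition~\ref{prop:polyhedral sets basic properties}, and the linear data $A_k^\pi$ (for $k\leq n+1$ from Proposition~\ref{prop:representing words using matrices}, and for $k>n+1$ just standard basis vectors) are all algorithmically available.

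The paper itself does not give a proof here: it simply cites \cite[Proposition~4.7]{Evetts} for polyhedrality and \cite[Lemma~4.9]{CEL} for effectiveness in the case $H=\{1\}$, asserting that the latter generalises immediately. Your write-up is exactly the kind of direct generalisation the paper has in mind, so in substance you are supplying the details the authors chose to omit rather than taking a different route.
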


We now prove the main result of this section.
\begin{proof}[Proof of Theorem \ref{thm:cosetreps}]
    The sets of words $V_H^\pi$ that we have constructed above satisfy some of the conditions required of the claimed sets $U_H^\pi$ of Theorem \ref{thm:cosetreps}. Namely, each $\phi_\pi(V_H^\pi)$ is an effectively constructible polyhedral set, and the union $\bigcup_{\pi\in P}V_H^\pi$ contains a representative for every coset in $G/H$. Furthermore, by Remark \ref{rem:minreps}, this union will contain a geodesic representative for every coset. However, there may be a coset that is represented by a word in $V_H^\pi$ and a word in $V_H^\mu$, for different patterns $\pi$ and $\mu$. To prove the theorem, we reduce each $V_H^\pi$ to a subset $U_H^\pi$, removing each such multiple representation, but retaining the property that each $\phi_\pi(U_H^\pi)$ is polyhedral. Our argument follows the proof of \cite[Theorem 4.2]{Evetts}, but emphasises effectiveness.

    For each $\pi\in P$, denote by $\mathcal{A}^\pi\colon \Z^{m_\pi}\to\Z^{n+1}$ the integer affine map given by
    \begin{equation*}
        \mathcal{A}^\pi\colon\phi_\pi(w)\mapsto \begin{pmatrix} A^\pi_1\cdot\phi_\pi(w) + B^\pi_1 \\ \vdots \\ A^\pi_{n+1}\cdot \phi_\pi(w) + B^\pi_{n+1}\end{pmatrix}.
    \end{equation*}
    As in Proposition \ref{prop:cosetcriteria}, let $\{h_1,\ldots,h_d\}$ be a basis for $H$. Let $\bm{1}_j$ denote the vector (of appropriate dimension) with a 1 in the $j$th position and zeroes elsewhere, and let \[E_i=\begin{pmatrix} 0 \\ \vdots \\ 0 \\ e_i\cdot h_1 \\ \vdots \\ e_i\cdot h_d \end{pmatrix}\in\Z^{2n+2+d}.\] Letting $p_{m}$ denote projection onto the first $m$ coordinates (which is an affine integer map), define polyhedral subsets of $\Z^{2n+2}$ as follows.
    \begin{align*}
    \Theta_0 &= p_{2n+2}\left( \bigcap_{i=1}^n\left\{\theta\in\Z^{2n+2+d}\colon \theta\cdot\left(\bm{1}_i-\bm{1}_{i+n+1}-E_i\right)=0\right\}\right)\\
        \Theta_= &= \Theta_0 \cap \{\theta\in\Z^{2n+2} : \theta\cdot(\bm{1}_{n+1}-\bm{1}_{2n+2})=0\}\\
        \Theta_> &= \Theta_0 \cap \{\theta\in\Z^{2n+2} : \theta\cdot(\bm{1}_{n+1}-\bm{1}_{2n+2})>0\}.
    \end{align*}
    Here $\Theta_0$ is used to detect pairs of words representing the same $H$-coset. That is, $(\mathcal{A}^\pi\circ\phi_\pi(v),\mathcal{A}^\mu\circ\phi_\mu(w))\in\Theta_0$ precisely when there is a tuple of coefficients $(\lambda_1,\ldots,\lambda_d)$ representing an element $h$, in terms of the basis $\{h_1,\ldots,h_d\}$, such that $\overline{v}=h\overline{w}$ (via the criterion of Proposition \ref{prop:cosetcriteria}). Building on that, $\Theta_=$ will be used to detect pairs of words of equal weight representing the same $H$-coset, whereas $\Theta_>$ will detect pairs of words where the first has larger weight, but they still represent the same $H$-coset.

   For each pair $(\pi,\mu)\in P^2$ satisfying $t_\pi=t_\mu$, define two subsets of $V_H^\pi$,
    \begin{equation}\label{eq:Requals}
        R_=^{\pi,\mu} = (\mathcal{A}^\pi\circ\phi_\pi)^{-1}\circ p_{n+1}\bigg( \big((\mathcal{A}^\pi\circ\phi_\pi(V_H^\pi)) \times (\mathcal{A}^\mu\circ\phi_\mu(V_H^\mu))\big)\: \cap\: \Theta_=\bigg)
    \end{equation}
    and
    \begin{equation}\label{eq:Rgreater}
        R_>^{\pi,\mu} = (\mathcal{A}^\pi\circ\phi_\pi)^{-1}\circ p_{n+1}\bigg( \big((\mathcal{A}^\pi\circ\phi_\pi(V_H^\pi)) \times (\mathcal{A}^\mu\circ\phi_\mu(V_H^\mu))\big)\: \cap\: \Theta_>\bigg).
    \end{equation}
    It follows from Propositions \ref{prop:representing words using matrices} and \ref{prop:cosetcriteria} that two words $w\in V_H^\pi$ and $v\in V_H^\mu$ represent the same $H$-coset and satisfy $\omega(w)>\omega(v)$ if and only if $t_\pi=t_\mu$ and $\left(\mathcal{A}^\pi(\phi_\pi(w)),\mathcal{A}^\mu(\phi_\mu(v))\right)\in\Theta_>$. Similarly, $w$ and $v$ represent the same $H$ coset and satisfy $\omega(w)=\omega(v)$ if and only if $t_\pi=t_\mu$ and $\left(\mathcal{A}^\pi(\phi_\pi(w)),\mathcal{A}^\mu(\phi_\mu(v))\right)\in\Theta_=$. 
    
    From this observation, we have that
    \begin{align*}
         R^{\pi,\mu}_= = \{w\in V_H^\pi : \exists v\in V_H^\mu~\text{such that}~\overline{w}\in H\overline{v}~\text{and}~\omega(w)=\omega(v)\},
    \end{align*}
    and
    \begin{align*}
         R^{\pi,\mu}_> = \{w\in V_H^\pi : \exists v\in V_H^\mu~\text{such that}~\overline{w}\in H\overline{v}~\text{and}~ \omega(w)>\omega(v)\}.
    \end{align*}
    Removing from $V_H^\pi$ all of the sets $R_>^{\pi,\mu}$ ensures that there are no non-geodesic coset representatives. To ensure that we do not have multiple geodesic representatives of the same coset, we use the sets $R_=^{\pi,\mu}$, but we must make a choice as to which of two equally weighted words to remove. To that end, fix an arbitrary total order $<$ on the set $P$, and remove the set $R_=^{\pi,\mu}$ precisely when $\mu<\pi$. In other words, if two coset representatives have equal weight, choose to keep the one whose pattern comes first in the order. More formally, for each $\pi$, let
    \begin{equation*}
        U_H^\pi = V_H^\pi \setminus \left( \bigcup_{\substack{\pi\neq\mu\\ t_\pi=t_\mu}} R_>^{\pi,\mu} \cup \bigcup_{\substack{\mu<\pi\\t_\pi=t_\mu}} R_=^{\pi,\mu} \right).
    \end{equation*}
    By construction, the disjoint union $\bigcup_{\pi\in P}U_H^\pi$ consists of exactly one geodesic representative for each $H$-coset. Furthermore, since $\Theta_=$, $\Theta_>$, and all $\phi_\pi(V_H^\pi)$ are polyhedral and effectively constructible, so too are the sets
    \begin{equation*}
        \phi_\pi(U_H^\pi) = \phi_\pi(V_H^\pi)\setminus \left( \bigcup_{\substack{\pi\neq\mu\\ t_\pi=t_\mu}} \phi_\pi(R_>^{\pi,\mu}) \cup \bigcup_{\substack{\mu<\pi\\t_\pi=t_\mu}} \phi_\pi(R_=^{\pi,\mu}) \right),
    \end{equation*}
    by Proposition \ref{prop:polyhedral sets basic properties}.
\end{proof}
\begin{remark}
    While it would represent a detour from the current article, we are confident that the related proof of \cite[Theorem 4.1]{Evetts} can also be made effective, with the result that the rational `coset growth series' for any subgroup of $G$ can be explicitly computed. 
\end{remark}

To finish the section, we record a result which allows us to select a set of minimal weight representatives of a polyhedral set of tuples of patterned words, while preserving rational growth. This result will be used specifically to pick minimal representatives for every twisted conjugacy class to conclude the proof of Theorem \ref{thmLetter:mainthm}. To simplify notation we first make the following definition.
\begin{definition}\label{def:productphi}
    Let $\bm{\pi}=(\pi_1,\ldots,\pi_d)\in P^d$ be some $d$-tuple of patterns, and write $m_{\bm{\pi}}=m_{\pi_1}+\cdots+m_{\pi_d}$. Then define the natural product map
    \[\phi_{\bm{\pi}}\colon W^{\pi_1}\times\cdots\times W^{\pi_d}\to\N^{m_{\bm{\pi}}}: (w^{(1)},\ldots,w^{(d)})\mapsto (\phi_{\pi_1}(w^{(1)}),\ldots,\phi_{\pi_d}(w^{(d)})).\]
\end{definition}
\begin{proposition}\label{prop:constructing language}
    Let $V\subset W^{\pi_1}\times\cdots\times W^{\pi_d}$ be a set of $d$-tuples of patterned words, for some $d$-tuple of patterns $\bm{\pi}=(\pi_1,\ldots,\pi_d)\in P^d$, such that $\phi_{\bm{\pi}}(V)$ is a polyhedral set.
    Then there exists a set of words $L_V$ over the generating set $S$ such that
    \begin{enumerate}
        \item for each $(v_1,\ldots,v_d)\in V$, the set $L_V$ contains exactly one of the words $v_i$, satisfying $\omega(v_i)\leq\omega(v_j)$ for $1\leq j\leq d$, and
        \item $L_V$ has explicitly computable $\N$-rational growth series.
    \end{enumerate}
\end{proposition}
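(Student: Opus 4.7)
The plan is to partition $V$ into $d$ polyhedral pieces indexed by which coordinate achieves the minimum weight (breaking ties by smallest index), project each piece onto the selected coordinate block, regroup the projections by pattern, and then compute the weighted growth series one pattern at a time.

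First, by Proposition \ref{prop:representing words using matrices}, each coordinate weight $\omega(v_j) = A_{n+1}^{\pi_j}\cdot \phi_{\pi_j}(v_j) + B_{n+1}^{\pi_j}$ is an integer affine function of the tuple-coordinate vector $\phi_{\bm{\pi}}(v_1,\ldots,v_d) \in \N^{m_{\bm{\pi}}}$. Consequently the comparison conditions ``$\omega(v_i) \le \omega(v_j)$'' and ``$\omega(v_i) < \omega(v_j)$'' translate into linear (in)equalities on $\N^{m_{\bm{\pi}}}$. For each $i\in\{1,\ldots,d\}$ I would define
\begin{equation*}
    P_i = \phi_{\bm{\pi}}(V) \cap \{\vec u : \omega_i \le \omega_j \text{ for } j>i,\ \omega_i < \omega_j \text{ for } j<i\},
\end{equation*}
which is polyhedral and effectively constructible by Proposition \ref{prop:polyhedral sets basic properties}, and the $P_i$ partition $\phi_{\bm{\pi}}(V)$. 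Projecting $P_i$ onto the block of coordinates corresponding to $\phi_{\pi_i}$ (again polyhedral by Proposition \ref{prop:polyhedral sets basic properties}) yields $U_i \subset \N^{m_{\pi_i}}$. The word $v_{i^*} \in W^{\pi_{i^*}}$ selected from a given tuple is then precisely the unique $v_i$ with $\phi_{\pi_i}(v_i) \in U_i$, so setting $L_V$ equal to the union of the $\phi_{\pi_i}^{-1}(U_i)$ over $i$ gives condition (1) of the proposition.

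Second, to make the growth series computation clean, I would regroup by pattern. Writing $\mathcal{P} = \{\pi_1,\ldots,\pi_d\}$ (as a set), for each $\pi\in\mathcal{P}$ define the polyhedral set $L_V^\pi = \bigcup_{i\,:\,\pi_i = \pi} U_i \subset \N^{m_\pi}$. Because distinct patterns $\pi \neq \mu$ give $W^\pi \cap W^\mu = \emptyset$ at the level of words in $S^*$, we obtain $L_V = \bigsqcup_{\pi \in \mathcal{P}} \phi_\pi^{-1}(L_V^\pi)$ as a disjoint union, and its growth series is the sum of the individual series.

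Finally, for each $\pi$, the weight of a word $v\in \phi_\pi^{-1}(L_V^\pi)$ is $A_{n+1}^\pi \cdot \phi_\pi(v) + B_{n+1}^\pi$, not the $\ell_1$ norm of $\phi_\pi(v)$, so Proposition \ref{prop:Nrational} does not apply directly. The key trick is that the entries of $A_{n+1}^\pi$ are positive integers (the generator weights), so the integer linear map $\psi\colon \N^{m_\pi}\to \N^{m_\pi}$ that multiplies the $j$-th coordinate by the $j$-th entry of $A_{n+1}^\pi$ is injective; its image $\psi(L_V^\pi)$ is an effectively constructible polyhedral set by Proposition \ref{prop:polyhedral sets basic properties}, and it satisfies $|\psi(u)|_{\ell_1} = A_{n+1}^\pi\cdot u$. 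Applying Proposition \ref{prop:Nrational} to $\psi(L_V^\pi)$ gives an explicitly computable $\N$-rational series counting $\#\{u\in L_V^\pi : A_{n+1}^\pi\cdot u \le r\}$, and multiplying by $z^{B_{n+1}^\pi}$ (which is $\N$-rational since $B_{n+1}^\pi = \omega(\pi) \in \N$) accounts for the constant shift to produce the growth series of $\phi_\pi^{-1}(L_V^\pi) \subset S^*$ with respect to $\omega$. Summing over the finitely many $\pi \in \mathcal{P}$ completes the proof.

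The only genuinely non-routine step is this last one: Proposition \ref{prop:Nrational} is stated for the $\ell_1$ metric, whereas we need counts by the weight $\omega$, which is a general positive-integer linear form on $\N^{m_\pi}$ with an additive constant. The coordinate-scaling bijection $\psi$ is the cleanest way around this, and it is the main technical obstacle worth flagging; everything else reduces to standard closure properties of polyhedral sets from Proposition \ref{prop:polyhedral sets basic properties}.
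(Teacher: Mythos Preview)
Your approach is essentially what the paper does: the paper's proof simply cites \cite[Lemma~2.21]{Evetts} for the construction of $L_V$ as a finite disjoint union of sets $L_V^{\pi_i}\subset S^*$ with polyhedral images $\phi_{\pi_i}(L_V^{\pi_i})$, and then invokes Proposition~\ref{prop:Nrational}. You have unpacked that construction explicitly, and your coordinate-scaling map $\psi$ to pass from the weight $A_{n+1}^\pi\cdot u + B_{n+1}^\pi$ to the $\ell_1$ norm is a clean detail that the paper's one-line appeal to Proposition~\ref{prop:Nrational} leaves implicit.

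There is, however, a genuine gap in your justification of condition~(1). You assert that for a tuple $(v_1,\ldots,v_d)\in V$ lying in $P_{i^*}$, the word $v_{i^*}$ is ``the unique $v_i$ with $\phi_{\pi_i}(v_i)\in U_i$''. But $U_j$ is obtained by \emph{projecting} $P_j$, so $\phi_{\pi_j}(v_j)\in U_j$ whenever there exists \emph{any} tuple $(w_1,\ldots,w_d)\in P_j$ with $w_j=v_j$; this tuple need not be $(v_1,\ldots,v_d)$ itself. Concretely, take $d=2$, $\pi_1=\pi_2$, and $V=\{(a,b),(b,c)\}$ with $\omega(a)<\omega(b)<\omega(c)$: both tuples lie in $P_1$, so $U_1=\{\phi_\pi(a),\phi_\pi(b)\}$ and $L_V=\{a,b\}$, yet the tuple $(a,b)$ now has \emph{both} of its entries in $L_V$. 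In the intended application (the proof of Theorem~\ref{thm:mainthm}) this cannot occur, since distinct tuples in $C(\bm{\pi})$ represent distinct twisted conjugacy classes and therefore have pairwise disjoint coordinate sets; but the proposition is stated for arbitrary polyhedral $V$, and your argument does not establish ``exactly one'' in that generality.
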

\begin{proof}
    The existence of a set $L_V$ with the described properties is precisely the statement of \cite[Lemma 2.21]{Evetts}. In fact, $L_V$ is effectively constructed in that article as a finite disjoint union of sets $L_V^{\pi_i}\subset S^*$, where each $\phi_{\pi_i}(L_V^{\pi_i})$ is an effectively constructible polyhedral set. By Proposition \ref{prop:Nrational}, the growth series of each $L_V$ is therefore $\N$-rational and explicitly computable.
\end{proof}

\section{Twisted conjugacy growth of virtually abelian groups}\label{sec:main}
This section describes the twisted conjugacy classes of an arbitrary finitely generated virtually abelian group $G$. We argue that the twisted conjugacy growth series is $\N$-rational. Moreover, if the fully characteristic finite index normal subgroup $\Z^n$ of $G$ and $\varphi\in \End(G)$ are given as in Definition \ref{def:input data}, the exact form of the rational series may be computed. Let $S$ denote the extended generating set (as introduced in Definition \ref{def:extendedgenset}) with respect to a finite weighted generating set $\Sigma$ of $G$ and let $T$ be a transversal for the $\Z^n$-cosets in $G$, with the group identity representing the identity coset. Hence, we can write every $g\in G$ uniquely as
\[ g=x_gt_g \quad \text{with } x_g\in \Z^n \text{ and } t_g\in T. \]
We will use, for a word $w\in S^\ast$, the notation $t_w$ and $x_w$ instead of $t_{\overline{w}}$ and $x_{\overline{w}}$, respectively.

\begin{definition}
    For every $g\in G$ and $\varphi\in \End(G)$ we define the subgroup $H(g)$ of $\Z^n$ by setting
    \[ H(g)=\Image(\Id_{\Z^n}-\tau_g\circ \varphi\vert_{\Z^n}). \]
\end{definition}

Since $\Z^n\lhd G$ is fully characteristic and normal, for $g\in G$, the map $\tau_g\circ \varphi\vert_{\Z^n}$ is indeed an endomorphism of $\Z^n$ and thus $H(g)$ is well-defined. Moreover, note that if $g\in \Z^n h$ for some $g,h\in G$, then $H(g)=H(h)$. In other words, $H(g)$ only depends on the $\Z^n$-coset of $g$. In particular, if $w\in W^\pi$ for a pattern $\pi\in P$, then $H(\overline{w})=H(\overline{\pi})$.

Recall that, as in the proof of Proposition \ref{prop:representing words using matrices}, $\tau_{\overline{\pi}}$ can be explicitly expressed as a matrix. Moreover, we can find a matrix representing the linear map $\Id_{\Z^n}-\tau_g\circ\varphi\vert_{\Z^n}$, and hence a $\Z$-basis for its image, $H(g)$. 

\begin{notation}
    For two subsets $A$ and $B$ of $G$ and an endomorphism $\varphi\in \End(G)$ we write ${}^{B,\varphi} A$ for the set of $\varphi$-twisted conjugates of elements from $A$ by elements from $B$, i.e.
\[{}^{B,\varphi} A=\{ba\varphi(b)^{-1} : a\in A,\: b\in B\}.\]
\end{notation}

\begin{lemma}\label{lem:union tc class}
	If $g\in G$ and $\varphi\in \End(G)$, then 
	\[ [g]_\varphi = \bigcup_{t\in T} {}^{\Z^nt,\varphi}\{g\}. \]
        Moreover, for every $t\in T$ we get
        \[{}^{\Z^nt,\varphi}\{g\}=H(tg\varphi(t)^{-1})tg\varphi(t)^{-1}.\]
\end{lemma}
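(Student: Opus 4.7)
The plan is to prove each of the two equalities in turn; both amount to unwinding definitions, with the only subtlety being the translation between multiplicative group notation and additive notation on $\mathbb{Z}^n$.

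For the first equality, I would invoke the fact that $T$ is a transversal for the $\mathbb{Z}^n$-cosets in $G$, so $G = \bigsqcup_{t \in T}\mathbb{Z}^n t$. Starting from the definition
\[[g]_\varphi = \{cg\varphi(c)^{-1} : c \in G\},\]
I would partition the range of $c$ according to its $\mathbb{Z}^n$-coset, obtaining
\[[g]_\varphi = \bigcup_{t \in T}\{cg\varphi(c)^{-1} : c \in \mathbb{Z}^n t\} = \bigcup_{t \in T}{}^{\mathbb{Z}^n t,\varphi}\{g\}.\]

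For the second equality, I would fix $t \in T$ and write $h := tg\varphi(t)^{-1}$. A generic element of ${}^{\mathbb{Z}^n t,\varphi}\{g\}$ has the form $zt\,g\,\varphi(zt)^{-1}$ with $z \in \mathbb{Z}^n$. Using that $\varphi$ is a homomorphism, this becomes
\[z\,(tg\varphi(t)^{-1})\,\varphi(z)^{-1} = zh\varphi(z)^{-1}.\]
To exhibit this as an element of $H(h)\cdot h$, I would move $h$ to the right via
\[zh\varphi(z)^{-1} = z\,\bigl(h\varphi(z)^{-1}h^{-1}\bigr)\,h = z\,\tau_h(\varphi(z))^{-1}\,h.\]
Here I use that $\mathbb{Z}^n$ is fully characteristic (so $\varphi(z) \in \mathbb{Z}^n$) and normal (so $\tau_h$ preserves $\mathbb{Z}^n$), which guarantees that the prefix $z\,\tau_h(\varphi(z))^{-1}$ actually lies in $\mathbb{Z}^n$. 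Switching to additive notation on $\mathbb{Z}^n$, this prefix is exactly $(\Id_{\mathbb{Z}^n} - \tau_h\circ\varphi|_{\mathbb{Z}^n})(z)$. Letting $z$ range over $\mathbb{Z}^n$ sweeps this prefix over the full image $H(h) = H(tg\varphi(t)^{-1})$, so the claimed identity ${}^{\mathbb{Z}^n t,\varphi}\{g\} = H(tg\varphi(t)^{-1})\cdot tg\varphi(t)^{-1}$ follows.

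I do not anticipate any real obstacle: the argument is just algebraic manipulation. The only point requiring attention is the notational passage from $z\,\tau_h(\varphi(z))^{-1}$ (multiplicative) to $(\Id - \tau_h\circ\varphi|_{\mathbb{Z}^n})(z)$ (additive), which relies on the fact that $\mathbb{Z}^n$ is closed under both $\varphi$ and $\tau_h$, i.e.\ on the fully characteristic and normal properties of $\mathbb{Z}^n$ recorded in Remark~\ref{fully characteristic Zn}.
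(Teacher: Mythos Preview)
Your proposal is correct and follows essentially the same route as the paper's own proof: both decompose $[g]_\varphi$ via the coset decomposition $G=\bigcup_{t\in T}\Z^n t$, and both rewrite $zt\,g\,\varphi(zt)^{-1}$ as $z\,\tau_{tg\varphi(t)^{-1}}(\varphi(z))^{-1}\,tg\varphi(t)^{-1}$ before passing to additive notation to recognise the prefix as $(\Id-\tau_{tg\varphi(t)^{-1}}\circ\varphi|_{\Z^n})(z)$. The only cosmetic difference is that you introduce the abbreviation $h=tg\varphi(t)^{-1}$ and spell out explicitly why $\varphi(z)$ and $\tau_h(\varphi(z))$ land in $\Z^n$, which the paper leaves implicit.
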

\begin{proof}
	Fix $g\in G$ and note that
        \[[g]_\varphi = \{xt\: g\: \varphi(xt)^{-1} : x\in \Z^n,\: t\in T\} = \bigcup_{t\in T} {}^{\Z^nt,\varphi}\{g\}.\]
        Moreover, if we fix some $t\in T$ it follows that
        \begin{align*}
		{}^{\Z^nt,\varphi}\{g\} &= \{xt\: g\: \varphi(xt)^{-1} : x\in \Z^n\} = \{x \tau_{tg\varphi(t)^{-1}}(\varphi(x)^{-1}) tg\varphi(t)^{-1} : x\in \Z^n\}\\
					            &= \{x - (\tau_{tg\varphi(t)^{-1}}\circ\varphi)(x) : x\in \Z^n\} tg\varphi(t)^{-1} = H(tg\varphi(t)^{-1})tg\varphi(t)^{-1}
	\end{align*}
        where we have used once more additive notation within the abelian group $\Z^n$ for convenience.
\end{proof}

Hence, every twisted conjugacy class of $G$ is a finite union of cosets of subgroups of $\Z^n$. The next lemma describes when another element belongs to one of these cosets appearing in this union.

\begin{lemma}\label{lem:description tconj by tZ^n}
    Let $g\in G$, $\varphi\in \End(G)$ and $t\in T$. Then, for every $h\in G$, the following holds
    \[h\in {}^{\Z^nt,\varphi}\{g\} \quad \Longleftrightarrow \quad \begin{cases}
                                                                        t_h\in tt_g\varphi(t)^{-1}\Z^n\text{, and}\\
                                                                        x_h\in H(tt_g\varphi(t)^{-1})+\tau_t(x_g)+x_{tt_g\varphi(t)^{-1}}
                                                                    \end{cases}.\]
\end{lemma}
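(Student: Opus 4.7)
The plan is to apply Lemma \ref{lem:union tc class} directly and then decompose the element $tg\varphi(t)^{-1}$ into its $\Z^n$- and transversal components using the normality (indeed fully characteristic property) of $\Z^n$, matching parts via the uniqueness of the normal form $h = x_h t_h$.

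First I would rewrite $tg\varphi(t)^{-1}$ using $g = x_g t_g$ and the normality of $\Z^n$:
\begin{equation*}
    tg\varphi(t)^{-1} = t x_g t_g \varphi(t)^{-1} = (t x_g t^{-1})\,(t t_g \varphi(t)^{-1}) = \tau_t(x_g)\cdot t t_g \varphi(t)^{-1}.
\end{equation*}
Next, decomposing $tt_g\varphi(t)^{-1} = x_{tt_g\varphi(t)^{-1}}\, t_{tt_g\varphi(t)^{-1}}$ and writing $\Z^n$ additively, this yields
\begin{equation*}
    tg\varphi(t)^{-1} = \bigl(\tau_t(x_g) + x_{tt_g\varphi(t)^{-1}}\bigr)\, t_{tt_g\varphi(t)^{-1}},
\end{equation*}
so that $x_{tg\varphi(t)^{-1}} = \tau_t(x_g) + x_{tt_g\varphi(t)^{-1}}$ and $t_{tg\varphi(t)^{-1}} = t_{tt_g\varphi(t)^{-1}}$.

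Then I would combine this with Lemma \ref{lem:union tc class}, which gives ${}^{\Z^n t,\varphi}\{g\} = H(tg\varphi(t)^{-1})\cdot tg\varphi(t)^{-1}$, together with the observation (noted right after the definition of $H$) that $H$ depends only on the $\Z^n$-coset, so $H(tg\varphi(t)^{-1}) = H(tt_g\varphi(t)^{-1})$. This yields
\begin{equation*}
    {}^{\Z^n t,\varphi}\{g\} = \bigl(H(tt_g\varphi(t)^{-1}) + \tau_t(x_g) + x_{tt_g\varphi(t)^{-1}}\bigr)\, t_{tt_g\varphi(t)^{-1}}.
\end{equation*}
Finally, by uniqueness of the decomposition $h = x_h t_h$ with $x_h \in \Z^n$ and $t_h \in T$, an element $h$ lies in this set if and only if its transversal part is $t_{tt_g\varphi(t)^{-1}}$ (equivalently, $t_h \in tt_g\varphi(t)^{-1}\Z^n$) and its $\Z^n$-part satisfies $x_h \in H(tt_g\varphi(t)^{-1}) + \tau_t(x_g) + x_{tt_g\varphi(t)^{-1}}$, which is precisely the claimed equivalence.

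The argument is essentially a direct computation; the only point requiring any care is keeping track of the various subscripts and remembering that $H(\cdot)$ depends only on the $\Z^n$-coset, so no genuine obstacle is expected.
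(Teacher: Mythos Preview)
Your proof is correct and follows essentially the same approach as the paper: both apply Lemma~\ref{lem:union tc class} to get ${}^{\Z^nt,\varphi}\{g\}=H(tg\varphi(t)^{-1})\,tg\varphi(t)^{-1}$, decompose $tg\varphi(t)^{-1}$ via $g=x_gt_g$ and normality of $\Z^n$, and then match normal forms. Your presentation is slightly more streamlined in that you compute the normal form of $tg\varphi(t)^{-1}$ directly and invoke the $\Z^n$-coset invariance of $H(\cdot)$, whereas the paper carries an explicit existential quantifier $z\in\Z^n$ through the computation before reaching the same conclusion.
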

\begin{proof}
    From Lemma \ref{lem:union tc class}, we observe, for $h\in G$, that
    \begin{align*}
        h\in {}^{\Z^nt,\varphi}\{g\} \quad &\Longleftrightarrow \quad h\in H(tg\varphi(t)^{-1})tg\varphi(t)^{-1}\\
                                            &\Longleftrightarrow \quad \exists z\in \Z^n: h=z \tau_{tg\varphi(t)^{-1}}(\varphi(z)^{-1}) tg\varphi(t)^{-1}\\
                                            &\Longleftrightarrow \quad \exists z\in \Z^n: x_ht_h=z \tau_{tt_g\varphi(t)^{-1}}(\varphi(z)^{-1})\tau_t(x_g)tt_g\varphi(t)^{-1}\\
                                            &\Longleftrightarrow \quad \begin{cases}
                                                                            t_h\in tt_g\varphi(t)^{-1}\Z^n\text{, and}\\
                                                                            x_h\in H(tt_g\varphi(t)^{-1})+\tau_t(x_g)+tt_g\varphi(t)^{-1}t_h^{-1}
                                                                        \end{cases}.
    \end{align*}
    If $t_h\in tt_g\varphi(t)^{-1}\Z^n$, then since $T$ is a transversal it holds by construction that $t_{tt_g\varphi(t)^{-1}}=t_h$ and thus $tt_g\varphi(t)^{-1}=x_{tt_g\varphi(t)^{-1}}t_h$.
\end{proof}

\begin{lemma}\label{lem:Z^n conjugates}
    Let $g\in G$ and $h\in {}^{\Z^n,\varphi} \{g\}$, then for every $t\in T$ it holds that ${}^{\Z^n t,\varphi} \{g\}= {}^{\Z^n t,\varphi} \{h\}$.
\end{lemma}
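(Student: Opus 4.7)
The plan is to reduce this to a direct algebraic verification. Since twisted conjugacy by elements of $\Z^n$ is a symmetric relation (the identity $h=zg\varphi(z)^{-1}$ with $z\in\Z^n$ rearranges to $g=z^{-1}h\varphi(z^{-1})^{-1}$, with $z^{-1}\in\Z^n$), the two sets ${}^{\Z^n t,\varphi}\{g\}$ and ${}^{\Z^n t,\varphi}\{h\}$ play symmetric roles, so it suffices to establish one containment, say ${}^{\Z^n t,\varphi}\{g\}\subseteq {}^{\Z^n t,\varphi}\{h\}$.

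To do this, I would take an arbitrary element $k=(xt)g\varphi(xt)^{-1}$ of ${}^{\Z^n t,\varphi}\{g\}$ and produce an explicit $x'\in\Z^n$ realising $k=(x't)h\varphi(x't)^{-1}$. Writing $h=zg\varphi(z)^{-1}$ with $z\in\Z^n$, the natural candidate is $x'=x\tau_t(z^{-1})\in\Z^n$, which lies in $\Z^n$ because $\Z^n\lhd G$. With this choice one has $x't=xtz^{-1}$, and after substituting $g=z^{-1}h\varphi(z)$ into $k$, the verification collapses to the obvious cancellation $\varphi(z)\varphi(z)^{-1}=1$; no appeal to commutativity in $\Z^n$ is even required at this stage, only that $\varphi$ is a homomorphism and $\Z^n$ is normal.

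There is no substantive obstacle here: the argument is routine bookkeeping once the correct conjugator $x'$ is identified, and the only conceptual content is the observation that substituting $g\mapsto z^{-1}h\varphi(z)$ introduces a stray factor $\varphi(z)$ on the right, which is precisely absorbed by the $\varphi(z^{-1})^{-1}=\varphi(z)$ generated by $x'$ in $\varphi(x't)^{-1}$. A less direct alternative would be to apply Lemma~\ref{lem:union tc class} and verify instead that $tg\varphi(t)^{-1}$ and $th\varphi(t)^{-1}$ represent the same coset of $H(tg\varphi(t)^{-1})=H(th\varphi(t)^{-1})$ (the two subgroups coincide because the two elements lie in the same $\Z^n$-coset, since the ``defect'' $\tau_t(z)\tau_{tg}(\varphi(z))^{-1}$ sits in $\Z^n$), but the direct substitution above seems more transparent.
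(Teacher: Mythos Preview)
Your proof is correct and is essentially the same as the paper's. The paper substitutes $h=z_0g\varphi(z_0)^{-1}$ into ${}^{\Z^n t,\varphi}\{h\}$ and uses the bijective change of variable $\tilde{x}=x\tau_t(z_0)$ on $\Z^n$ to obtain equality in a single chain, whereas you reduce to one containment by symmetry and exhibit the explicit conjugator $x'=x\tau_t(z^{-1})$; the underlying manipulation (rewriting $xtz^{\pm1}$ as $x\tau_t(z^{\pm1})t$ and using normality of $\Z^n$) is identical.
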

\begin{proof}
    Since $h\in H(g)g$, there exists some $z_0\in \Z^n$ such that $h=z_0g\varphi(z_0)^{-1}g^{-1}g=z_0g\varphi(z_0)^{-1}$. Using that $\tau_t(z_0)\in \Z^n$ is fixed, we obtain, for every $t\in T$, that
    \begin{align*}
        {}^{\Z^n t,\varphi} \{h\} &= \{xth\varphi(xt)^{-1} : x\in \Z^n\} = \{xtz_0g\varphi(xtz_0)^{-1} : x\in \Z^n\} \\
                                &= \{x \tau_t(z_0) tg\varphi(x \tau_t(z_0)t)^{-1} : x\in \Z^n\} = \{\tilde{x}tg\varphi(\tilde{x}t)^{-1} : \tilde{x}\in \Z^n\}\\
                                &= {}^{\Z^n t,\varphi} \{g\}.
    \end{align*}
\end{proof}

Recall that, by Theorem \ref{thm:cosetreps}, we have for each pattern $\pi\in P$ a set of geodesic representatives $U^\pi_{H(\overline{\pi})}\subset S^\ast$ for $H(\overline{\pi})$-cosets. We use these sets to describe unique minimal representatives for each $H(\cdot)$-coset appearing in the description of a fixed twisted conjugacy class, as in Lemma \ref{lem:union tc class}. Write $T=\{t_1,\ldots,t_D\}$, with $t_1=1_G$, and $D=[G:\Z^n]$.
\begin{definition}\label{def:Cpi}
    We call a $D$-tuple $\bm{\pi}=(\pi_1,\dots,\pi_D)$ of patterns, i.e. in $P$, \emph{permissible}  if $t_j\overline{\pi_1}\varphi(t_j)^{-1}\in \Z^n\overline{\pi_j}$ for $2\leq j \leq D$.\\
    For every permissible $D$-tuple of patterns $\bm{\pi}=(\pi_1,\dots,\pi_D)$, define
    \[C(\bm{\pi})=\left\{\left(w^{(1)},\dots,w^{(D)}\right) \in U_{H(\overline{\pi_1})}^{\pi_1} \times \dots \times U_{H(\overline{\pi_D})}^{\pi_D} : \begin{array}{l}\overline{w^{(j)}}\in {}^{\Z^nt_j,\varphi}\left\{\overline{w^{(1)}}\right\}\\ \text{for }2\leq j\leq D\end{array}\right\}.\]
    In other words, $C(\bm{\pi})$ contains exactly the $D$-tuples of words where each $w^{(j)}$ is the unique minimal representative (with pattern $\pi_j$) of its $H(\overline{\pi_j})$-coset and each $\overline{w^{(j)}}$ is $\varphi$-twisted conjugate to $\overline{w^{(1)}}$ by an element of $\Z^nt_j$.
\end{definition}

\begin{lemma}\label{lem:elements C(pi)}
    Each element of $C(\bm{\pi})$ consists of $D$-tuples of words representing elements of the same $\varphi$-twisted conjugacy class and every $\varphi$-twisted conjugacy class is represented by an element of $C(\bm{\pi})$, for some permissible $D$-tuple $\bm{\pi}\in P^D$.\\
    Moreover, the weight of the $\varphi$-twisted conjugacy class is realised by the word(s) of smallest length in this $D$-tuple, i.e. for every $\left(w^{(1)},\dots,w^{(D)}\right)\in C(\bm{\pi})$ it holds that
    \[\omega\left(\left[\overline{w^{(1)}}\right]_\varphi\right)=\min \{\omega(\overline{w^{(1)}}),\dots, \omega(\overline{w^{(D)}})\}.\]
\end{lemma}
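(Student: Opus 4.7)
The plan is to handle the three assertions of the lemma in order, leaning on Lemmas \ref{lem:union tc class}, \ref{lem:description tconj by tZ^n}, \ref{lem:Z^n conjugates} and on Theorem \ref{thm:cosetreps}.

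For the first assertion (tuples in $C(\bm{\pi})$ represent a single class), the definition of $C(\bm{\pi})$ forces $\overline{w^{(j)}}\in {}^{\Z^n t_j,\varphi}\{\overline{w^{(1)}}\}$, so $\overline{w^{(j)}}$ is $\varphi$-twisted conjugate to $\overline{w^{(1)}}$ by an element of $\Z^n t_j\subset G$. Thus all $D$ entries lie in $[\overline{w^{(1)}}]_\varphi$ and this step is just an unpacking of definitions.

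For the second assertion (every class is represented), I start from an arbitrary class $[g]_\varphi$ and use Lemma \ref{lem:union tc class} to decompose it as $[g]_\varphi=\bigcup_{j=1}^D {}^{\Z^n t_j,\varphi}\{g\}$, where each piece equals the single $H(t_jg\varphi(t_j)^{-1})$-coset of $t_jg\varphi(t_j)^{-1}$. Since $H(\cdot)$ depends only on the $\Z^n$-coset, I can apply Theorem \ref{thm:cosetreps} with $H=H(t_jg\varphi(t_j)^{-1})$ to obtain a unique geodesic representative $w^{(j)}$ of this coset, with some pattern $\pi_j$, lying in $U^{\pi_j}_{H(\overline{\pi_j})}$. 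To verify that $\bm{\pi}=(\pi_1,\dots,\pi_D)$ is permissible, I observe that $t_{\pi_j}=t_{t_j g\varphi(t_j)^{-1}}=t_{t_j t_g\varphi(t_j)^{-1}}$ (the $\Z^n$-part of $g$ being absorbed by normality), while $t_j\overline{\pi_1}\varphi(t_j)^{-1}\in\Z^n t_jt_g\varphi(t_j)^{-1}=\Z^n\overline{\pi_j}$, as required. Finally, Lemma \ref{lem:Z^n conjugates} replaces $g$ by $\overline{w^{(1)}}$ in each of the sets ${}^{\Z^n t_j,\varphi}\{g\}$ (since $\overline{w^{(1)}}\in {}^{\Z^n,\varphi}\{g\}$), giving $(w^{(1)},\dots,w^{(D)})\in C(\bm{\pi})$.

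For the third assertion (weight formula), I note that each $w^{(j)}\in U^{\pi_j}_{H(\overline{\pi_j})}$ is, by Theorem \ref{thm:cosetreps}, the unique geodesic representative of its $H(\overline{\pi_j})$-coset; by permissibility and the defining condition of $C(\bm{\pi})$, this coset coincides with ${}^{\Z^n t_j,\varphi}\{\overline{w^{(1)}}\}$. Hence $\omega(\overline{w^{(j)}})=\omega\bigl({}^{\Z^n t_j,\varphi}\{\overline{w^{(1)}}\}\bigr)$. Taking the minimum over $j$ and invoking Lemma \ref{lem:union tc class} yields
\[\min_{1\leq j\leq D}\omega(\overline{w^{(j)}})=\min_{t\in T}\omega\bigl({}^{\Z^n t,\varphi}\{\overline{w^{(1)}}\}\bigr)=\omega\bigl([\overline{w^{(1)}}]_\varphi\bigr).\]
The main obstacle I anticipate is the bookkeeping in the second part, namely matching the patterns $\pi_j$ produced by Theorem \ref{thm:cosetreps} to the permissibility condition while simultaneously ensuring that the representative $\overline{w^{(j)}}$ is $\varphi$-twisted conjugate to $\overline{w^{(1)}}$ (not just to $g$); this is precisely where Lemma \ref{lem:Z^n conjugates} is essential.
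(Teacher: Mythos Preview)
Your proposal is correct and follows essentially the same approach as the paper: both proofs unpack the definition for the first assertion, use Lemma~\ref{lem:union tc class} together with Theorem~\ref{thm:cosetreps} and Lemma~\ref{lem:Z^n conjugates} for the second (the paper makes the identification $H(t_jg\varphi(t_j)^{-1})=H(\overline{\pi_j})$ slightly more explicit), and deduce the weight formula from the fact that each $w^{(j)}$ is the geodesic representative of the $H(\overline{\pi_j})$-coset ${}^{\Z^n t_j,\varphi}\{\overline{w^{(1)}}\}$. The only cosmetic difference is that the paper treats the weight claim before the surjectivity claim.
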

\begin{proof}
    Fix a $D$-tuple $\left(w^{(1)},\dots,w^{(D)}\right)\in C(\bm{\pi})$. By definition, $\overline{w^{(j)}}\sim_\varphi \overline{w^{(1)}}$ for $2\leq j\leq D$ and thus these elements represent the same $\varphi$-twisted conjugacy class $\left[\overline{w^{(1)}}\right]_\varphi$.\\
    Moreover, observe that Lemma \ref{lem:union tc class} implies
    \[\left[\overline{w^{(1)}}\right]_\varphi = \bigcup_{j=1}^D H(t_j\overline{w^{(1)}}\varphi(t_j)^{-1})t_j\overline{w^{(1)}}\varphi(t_j)^{-1} = \bigcup_{j=1}^D H(\overline{\pi_j})t_j\overline{w^{(1)}}\varphi(t_j)^{-1}\]
    where we used that $t_j\overline{w^{(1)}}\varphi(t_j)^{-1}\in t_j\overline{\pi_1}\varphi(t_j)^{-1}\Z^n=\overline{\pi_j}\Z^n$. Observe that, for $2\leq j\leq D$,
    \[\overline{w^{(j)}}\in {}^{\Z^nt_j,\varphi}\left\{\overline{w^{(1)}}\right\}=H(t_j\overline{w^{(1)}}\varphi(t_j)^{-1})t_j\overline{w^{(1)}}\varphi(t_j)^{-1}=H(\overline{\pi_j})t_j\overline{w^{(1)}}\varphi(t_j)^{-1}.\]
    Since $w^{(j)}\in U^{\pi_j}_{H(\overline{\pi_j})}$, the evaluation $\overline{w^{(j)}}$ is the unique minimal representative of its $H(\overline{\pi_j})$-coset $H(\overline{\pi_j})t_j\overline{w^{(1)}}\varphi(t_j)^{-1}$. Hence, the word(s) in $\left\{w^{(1)},\dots,w^{(D)}\right\}$ of smallest length realise(s) the weight of $\left[\overline{w^{(1)}}\right]_\varphi$.

    To see that every $\varphi$-twisted conjugacy class is represented by an element of $C(\bm{\pi})$ for some $\bm{\pi}$, consider such a class $[g]_\varphi = \bigcup_{t\in T} H(tg\varphi(t)^{-1})tg\varphi(t)^{-1}$. By Theorem \ref{thm:cosetreps}, for each $t\in T$ there exists some pattern $\pi_t\in P$ such that the coset $H(tg\varphi(t)^{-1})tg\varphi(t)^{-1}$ is uniquely represented by a word in $U^{\pi_t}_{H(tg\varphi(t)^{-1})}$. Thus, there exists a unique $D$-tuple of words that all represent elements of $[g]_\varphi$, with patterns $\pi_{t_1},\ldots,\pi_{t_D}\in P$, of the form
    
    \[(w^{(1)},\ldots,w^{(D)})\in U^{\pi_{t_1}}_{H(t_1g\varphi(t_1)^{-1})}\times \cdots \times U^{\pi_{t_D}}_{H(t_Dg\varphi(t_D)^{-1})}.\]
    
    We argue that $(\pi_{t_1},\dots,\pi_{t_D})$ is permissible and $(w^{(1)},\ldots,w^{(D)})\in C(\bm{\pi})$. We first remark that $H(t_jg\varphi(t_j)^{-1})=H(\overline{\pi_{t_j}})$ for $1\leq j\leq D$. Indeed, note that since $\overline{w^{(j)}}\in H(t_jg\varphi(t_j)^{-1})t_jg\varphi(t_j)^{-1}$, the elements $t_jg\varphi(t_j)^{-1}$ and $\overline{w^{(j)}}$ represent the same $H(t_jg\varphi(t_j)^{-1})$-coset (and hence the same $\Z^n$-coset). Furthermore, since $w^{(j)}$ has pattern $\pi_{t_j}$, $\overline{w^{(j)}}$ and $\overline{\pi_{t_j}}$ also represent the same $\Z^n$-coset. Thus, by definition of the subgroups $H(\cdot)$, we get that $H(t_jg\varphi(t_j)^{-1})=H(\overline{\pi_{t_j}})$.
    
    Recall that $t_1=1_G$ by definition of $T$. Since, for $1\leq j\leq D$, it holds that $t_jg\varphi(t_j)^{-1}$ and $\overline{\pi_{t_j}}$ represent the same $\Z^n$-coset, we obtain that $\Z^ng=\Z^n\overline{\pi_{t_1}}$ and thus for $2\leq j \leq D$ that
    \[\Z^n\overline{\pi_{t_j}}=\Z^n t_jg\varphi(t_j)^{-1}= \Z^n t_j\overline{\pi_{t_1}}\varphi(t_j)^{-1}.\]
    Hence, it remains to argue that $\overline{w^{(j)}}\in {}^{\Z^nt_j,\varphi}\left\{\overline{w^{(1)}}\right\}$ for $ 2\leq j\leq D$. Since $\overline{w^{(1)}}\in H(t_1g\varphi(t_1)^{-1})t_1g\varphi(t_1)^{-1}=H(g)g$, Lemmas \ref{lem:union tc class} and \ref{lem:Z^n conjugates} imply, for $2\leq j\leq D$, that
    \[\overline{w^{(j)}}\in H(t_jg\varphi(t_j)^{-1})t_jg\varphi(t_j)^{-1}={}^{\Z^nt_j,\varphi}\left\{g\right\}={}^{\Z^nt_j,\varphi}\left\{\overline{w^{(1)}}\right\}.\]
    \end{proof}

\begin{lemma}\label{lem:C(pi) polyhedral set}
    The image $\phi_{\bm{\pi}}(C(\bm{\pi}))\subset \N^{m_{\bm{\pi}}}$ of the map $\phi_{\bm{\pi}}$ in Definition \ref{def:productphi} is an effectively constructible polyhedral set for every permissible $D$-tuple $\bm{\pi}\in P^d$.
\end{lemma}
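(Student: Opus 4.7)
The plan is to present $\phi_{\bm{\pi}}(C(\bm{\pi}))$ as an explicit intersection of effectively constructible polyhedral sets, leveraging the machinery developed in Section~\ref{sec:coset} together with the coset-criterion of Lemma~\ref{lem:description tconj by tZ^n}. First, for each $j \in \{1,\ldots,D\}$, I would observe that the subgroup $H(\overline{\pi_j})$ is the image of an explicit integer endomorphism of $\Z^n$, namely $\Id_{\Z^n} - \tau_{\overline{\pi_j}} \circ \varphi|_{\Z^n}$, whose matrix, and thus a $\Z$-basis for $H(\overline{\pi_j})$, is computable from the description of $G$ (for instance, via Smith normal form). With such a basis in hand, Theorem~\ref{thm:cosetreps} yields that each $\phi_{\pi_j}(U^{\pi_j}_{H(\overline{\pi_j})})$ is an effectively constructible polyhedral set, and the Cartesian product
\[
    \phi_{\pi_1}(U^{\pi_1}_{H(\overline{\pi_1})}) \times \cdots \times \phi_{\pi_D}(U^{\pi_D}_{H(\overline{\pi_D})}) \subset \N^{m_{\bm{\pi}}}
\]
is then an effectively constructible polyhedral set by Proposition~\ref{prop:polyhedral sets basic properties}(ii).

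Next, I would cut this product down to $\phi_{\bm{\pi}}(C(\bm{\pi}))$ by imposing, for each $j \in \{2,\ldots,D\}$, the twisted conjugacy condition $\overline{w^{(j)}} \in {}^{\Z^n t_j,\varphi}\{\overline{w^{(1)}}\}$. By Lemma~\ref{lem:description tconj by tZ^n}, this condition splits into a transversal part and a $\Z^n$-part. Since $w^{(j)} \in W^{\pi_j}$ forces $t_{w^{(j)}} = t_{\pi_j}$, the transversal part is automatic from the permissibility of $\bm{\pi}$, and only the $\Z^n$-part
\[
    x_{w^{(j)}} - \tau_{t_j}(x_{w^{(1)}}) - x_{t_j t_{\pi_1}\varphi(t_j)^{-1}} \in H(\overline{\pi_j})
\]
remains to be encoded.

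To do so, I would invoke Notation~\ref{not:representing words using matrices} to write $x_{w^{(1)}}$ and $x_{w^{(j)}}$ as explicit integral affine functions of $\phi_{\pi_1}(w^{(1)})$ and $\phi_{\pi_j}(w^{(j)})$ respectively, and combine these with the integer matrix for $\tau_{t_j}$ on $\Z^n$. The left-hand side of the displayed condition then becomes $\Psi_j(\phi_{\bm{\pi}}(w^{(1)},\ldots,w^{(D)}))$ for some explicit integral affine map $\Psi_j\colon \Z^{m_{\bm{\pi}}} \to \Z^n$; its constant term involves $x_{t_j t_{\pi_1}\varphi(t_j)^{-1}}$, which is computable by reducing the product $t_j t_{\pi_1}\varphi(t_j)^{-1}$ to subgroup-transversal normal form via the multiplication function $f$ of Definition~\ref{def:input data}. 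Since $H(\overline{\pi_j})$ is an effectively constructible polyhedral set (being the image of an integer linear map), Proposition~\ref{prop:polyhedral sets basic properties}(iii) gives that $\Psi_j^{-1}(H(\overline{\pi_j})) \subset \Z^{m_{\bm{\pi}}}$ is also an effectively constructible polyhedral set.

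Finally, intersecting the product set above with the preimages $\Psi_j^{-1}(H(\overline{\pi_j}))$ for $j = 2,\ldots,D$ will yield precisely $\phi_{\bm{\pi}}(C(\bm{\pi}))$, and this intersection is effectively constructible polyhedral by Proposition~\ref{prop:polyhedral sets basic properties}(i). The main hurdle will not be a deep mathematical point but rather the careful bookkeeping required to assemble the maps $\Psi_j$ and the constants $x_{t_j t_{\pi_1}\varphi(t_j)^{-1}}$ from the input description; once this is done, the claim follows by successive applications of the closure properties already catalogued in Proposition~\ref{prop:polyhedral sets basic properties}.
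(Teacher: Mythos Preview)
Your proposal is correct and follows essentially the same route as the paper: both arguments start from the Cartesian product of the effectively constructible sets $\phi_{\pi_j}(U^{\pi_j}_{H(\overline{\pi_j})})$, use permissibility to discharge the transversal half of Lemma~\ref{lem:description tconj by tZ^n}, and then encode the remaining $\Z^n$-condition $x_{w^{(j)}}-\tau_{t_j}(x_{w^{(1)}})-x_{t_jt_{\pi_1}\varphi(t_j)^{-1}}\in H(\overline{\pi_j})$ polyhedrally. The only cosmetic difference is that the paper unpacks this membership explicitly (introducing auxiliary coordinates for a witness $z\in\Z^n$, writing down elementary sets $E_i^j$, and projecting), whereas you package the same step as $\Psi_j^{-1}(H(\overline{\pi_j}))$ via Proposition~\ref{prop:polyhedral sets basic properties}(iii); these are two phrasings of the same closure property.
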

\begin{proof}
    We want to describe the condition that $\overline{w^{(j)}}\in {}^{\Z^nt_j,\varphi}\left\{\overline{w^{(1)}}\right\}$ (for $2\leq j \leq D$) using matrices. This will allow us to use the properties of polyhedral sets from Proposition \ref{prop:polyhedral sets basic properties} combined with Theorem \ref{thm:cosetreps} to conclude the result.
    
    For $2\leq j\leq D$ we recall that $t_j\overline{\pi_1}\varphi(t_j)^{-1}\in \Z^n\overline{\pi_j}$ and thus
    \[t_{w^{(j)}} = t_{\pi_j} \in t_j\overline{\pi_1}\varphi(t_j)^{-1}\Z^n = t_jt_{w^{(1)}}\varphi(t_j)^{-1}\Z^n.\]
    Hence, by Lemma \ref{lem:description tconj by tZ^n} it follows that
    \begin{align*}
        \overline{w^{(j)}}\in {}^{\Z^nt_j,\varphi}\left\{\overline{w^{(1)}}\right\}\quad
        &\Longleftrightarrow \quad x_{w^{(j)}}\in H(t_jt_{w^{(1)}}\varphi(t_j)^{-1})+\tau_{t_j}(x_{w^{(1)}})+x_{t_jt_{w^{(1)}}\varphi(t_j)^{-1}}\\
        &\Longleftrightarrow \quad x_{w^{(j)}}\in H(\overline{\pi_j})+\tau_{t_j}(x_{w^{(1)}})+x_{t_jt_{\pi_1}\varphi(t_j)^{-1}}.                   
    \end{align*}
    Recall that Proposition \ref{prop:representing words using matrices} gives
        \[x_{w^{(j)}}=
    \begin{pmatrix}
        A_1^{\pi_j}\cdot \phi_{\pi_j}({w^{(j)}})\\
        \vdots\\
        A_n^{\pi_j}\cdot \phi_{\pi_j}({w^{(j)}})
    \end{pmatrix}+
    \begin{pmatrix}
        B_1^{\pi_j}\\
        \vdots\\
        B_n^{\pi_j}
    \end{pmatrix}\]
    and that its proof yields
    \begin{align*}
        \tau_{t_j}(x_{w^{(1)}})&=\tau_{t_j}\left(\overline{w^{(1)}}\right)\tau_{t_j}\left(t_{\pi_1}^{-1}\right) = 
        \begin{pmatrix}
            A_{1,t_j}^{\pi_1}\cdot \phi_{\pi_1}(w^{(1)})\\
            \vdots\\
            A_{n,t_j}^{\pi_1}\cdot \phi_{\pi_1}(w^{(1)})
        \end{pmatrix}
        \tau_{t_j}\left(\overline{\pi_1}\right) \tau_{t_j}\left(t_{\pi_1}^{-1}\right)\\
        &=\begin{pmatrix}
            A_{1,t_j}^{\pi_1}\cdot \phi_{\pi_1}(w^{(1)})\\
            \vdots\\
            A_{n,t_j}^{\pi_1}\cdot \phi_{\pi_1}(w^{(1)})
        \end{pmatrix}
        +\tau_{t_j}\begin{pmatrix}
            B_1^{\pi_1}\\
            \vdots\\
            B_n^{\pi_1}
        \end{pmatrix}
    \end{align*}
    where in the last step we used Notation \ref{not:representing words using matrices}. Define $C^j=(c_{il}^j)\in \Z^{n\times n}$ and $D_{i,t_j}\in \Z$ (for $1\leq i\leq n$) by setting
    \[C^j=\Id_{\Z^n}-\tau_{t_{\pi_j}}\circ \varphi\vert_{\Z^n} \quad \text{and} \quad 
    \begin{pmatrix}
        D_{1,t_j}\\
        \vdots\\
        D_{n,t_j}
    \end{pmatrix}=\tau_{t_j}\begin{pmatrix}
            B_1^{\pi_1}\\
            \vdots\\
            B_n^{\pi_1}
        \end{pmatrix}+x_{t_jt_{\pi_1}\varphi(t_j)^{-1}}.\]
    Hence, we find that 
    \begin{align*}
        &\overline{w^{(j)}}\in {}^{\Z^nt_j,\varphi}\left\{\overline{w^{(1)}}\right\}\quad
        \Longleftrightarrow \quad x_{w^{(j)}}\in H(\overline{\pi_j})+\tau_{t_j}(x_{w^{(1)}})+x_{t_jt_{\pi_1}\varphi(t_j)^{-1}}\\
        &\Longleftrightarrow \quad \exists z\in \Z^n:\\
	&\hspace{10pt}\begin{pmatrix}
        A_1^{\pi_j}\cdot \phi_{\pi_j}({w^{(j)}})\\
        \vdots\\
        A_n^{\pi_j}\cdot \phi_{\pi_j}({w^{(j)}})
    \end{pmatrix}+
    \begin{pmatrix}
        B_1^{\pi_j}\\
        \vdots\\
        B_n^{\pi_j}
    \end{pmatrix} = 
    \begin{pmatrix}
        A_{1,t_j}^{\pi_1}\cdot \phi_{\pi_1}(w^{(1)})\\
        \vdots\\
        A_{n,t_j}^{\pi_1}\cdot \phi_{\pi_1}(w^{(1)})
    \end{pmatrix}
    +\begin{pmatrix}
        D_{1,t_j}\\
        \vdots\\
        D_{n,t_j}
    \end{pmatrix}
    +C^j(z)\\
    &\Longleftrightarrow \quad \exists (z_1,\dots,z_n)\in \Z^n:\: \forall 1\leq i\leq n:\\
    &\hspace{45pt}A_{i,t_j}^{\pi_1}\cdot \phi_{\pi_1}(w^{(1)})-A_i^{\pi_j}\cdot \phi_{\pi_j}({w^{(j)}})+\sum_{l=1}^n c_{il}^jz_l = B_i^{\pi_j}-D_{i,t_j}.
    \end{align*}
    Now define for $1\leq i \leq n$ and $2\leq j \leq D$ vectors $M_i^j(\bm{\pi})$ and $N_i^j(\bm{\pi})$. The vector $M_i^j(\bm{\pi})$ comprises $D$ entries, the $l$-th entry being an element of $\Z^{m_{\pi_l}}$, whereas the vector $N_i^j(\bm{\pi})$ simply lies in $\Z^{(D-1)n}$. They are defined as follows
    \[\hspace{-55pt}M_i^j(\bm{\pi})=
    \begin{pNiceMatrix}[last-col=2]
	A_{i,t_j}^{\pi_1} & \\
        0 & \\
        \vdots & \\
        0 & \\
        -A_i^{\pi_j} & \hspace{5pt}j\text{-th entry}\\
        0 & \\
        \vdots & \\
        0 &
    \end{pNiceMatrix}
    \quad \text{and} \quad N_i^j(\bm{\pi})=
    \begin{pNiceMatrix}
        0\\\vdots\\0\\
        c_{i1}^j\\\vdots\\c_{in}^j\\
        0\\\vdots\\0
        \CodeAfter
        \SubMatrix.{1-1}{3-1}\}[right-xshift=0.5em,name=A]
        \SubMatrix.{4-1}{6-1}\}[right-xshift=0.5em,name=B]
        \SubMatrix.{7-1}{9-1}\}[right-xshift=0.5em,name=C]
        \tikz \node [right] at (A-right.east) {$(j-2)n$ zeroes} ;
        \tikz \node [right] at (B-right.east) {$i$-th row of $C^j$} ;
        \tikz \node [right] at (C-right.east) {$(d-j)n$ zeroes} ;
    \end{pNiceMatrix}.\]
    Thus we obtain that
    \begin{align*}
        \overline{w^{(j)}}\in {}^{\Z^nt_j,\varphi}\left\{\overline{w^{(1)}}\right\}\quad \Longleftrightarrow \quad 
        &\exists z\in \Z^n:\: \forall 1\leq i\leq n:\\
        &M_i^j(\bm{\pi})\cdot
        \begin{pmatrix}
            \phi_{\pi_1}(w^{(1)})\\
            \vdots\\
            \phi_{\pi_D}({w^{(D)}})
        \end{pmatrix}
        +N_i^j(\bm{\pi})\cdot z = B_i^{\pi_j}-D_{i,t_j}.
    \end{align*}
    Note that for $1\leq i \leq n$ and $2\leq j \leq D$, the sets
    \[E_i^j=\left\{k\in \N^{m_{\bm{\pi}}+(D-1)n} :
        \begin{pmatrix}
            M_i^j(\bm{\pi})\\
            N_i^j(\bm{\pi})
        \end{pmatrix}\cdot k =  B_i^{\pi_j}-D_{i,t_j}\right\}\]
    are elementary sets. Moreover, if we denote with $p_{m_{\bm{\pi}}}:\N^{m_{\bm{\pi}}+(D-1)n}\to \N^{m_{\bm{\pi}}}$ the projection onto the first $m_{\bm{\pi}}$ coordinates, then
    \[\phi_{\bm{\pi}}(C(\bm{\pi}))=(\phi_{\pi_1}(U_{H(\overline{\pi_1})}^{\pi_1}) \times \dots \times \phi_{\pi_D}(U_{H(\overline{\pi_D})}^{\pi_D})) \times \bigcap_{j=2}^D \bigcap_{i=1}^n p_{m_{\bm{\pi}}}(E_i^j)\]
    is an effectively constructible polyhedral set by Proposition \ref{prop:polyhedral sets basic properties} and Theorem \ref{thm:cosetreps}.
\end{proof}

We are now in a position to prove our main theorem.
\begin{theoremLetter}\label{thm:mainthm}
    Let $G$ be a finitely generated virtually abelian group with a finite weighted (monoid) generating set $\Sigma$ and $\varphi\in \End(G)$. Then the $\varphi$-twisted conjugacy growth series of $G$ with respect to $\Sigma$ is an $\N$-rational function. Moreover, given a description for $G$ as in Definition \ref{def:input data}, the $\N$-rational function is explicitly computable.
\end{theoremLetter}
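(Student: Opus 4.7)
The plan is to combine Lemmas \ref{lem:union tc class} through \ref{lem:C(pi) polyhedral set} with Proposition \ref{prop:constructing language} in order to assemble the twisted conjugacy growth series as a finite sum of explicitly computable $\N$-rational functions, one for each permissible $D$-tuple of patterns.

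First I would enumerate the set of permissible $D$-tuples $\bm{\pi}=(\pi_1,\dots,\pi_D)\in P^D$, which is finite since $P$ is finite, and the condition $t_j\overline{\pi_1}\varphi(t_j)^{-1}\in\Z^n\overline{\pi_j}$ in Definition \ref{def:Cpi} is decidable from the description of $G$. Crucially, Lemma \ref{lem:elements C(pi)} tells us two things: every $\varphi$-twisted conjugacy class is represented by exactly one element of exactly one $C(\bm{\pi})$ (the pattern $\pi_j$ of the unique minimal representative of each of the $D$ cosets making up the class is uniquely determined), and within each such $D$-tuple the weight of the class is realised by the word(s) of minimum weight among the $D$ coordinates. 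Thus the twisted conjugacy classes are partitioned as $\bm{\pi}$ ranges over the permissible tuples, and each class contributes a single weight.

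Next, for each permissible $\bm{\pi}$, Lemma \ref{lem:C(pi) polyhedral set} provides that $\phi_{\bm{\pi}}(C(\bm{\pi}))\subset\N^{m_{\bm{\pi}}}$ is an effectively constructible polyhedral set. I would then apply Proposition \ref{prop:constructing language} to $V=C(\bm{\pi})$ to obtain a language $L_{C(\bm{\pi})}\subset S^\ast$ that contains, for each $D$-tuple in $C(\bm{\pi})$, exactly one word of minimum weight among the $D$ entries. By Proposition \ref{prop:constructing language}, the growth series of $L_{C(\bm{\pi})}$ (with respect to the weight $\omega$) is an explicitly computable $\N$-rational function. By the second part of Lemma \ref{lem:elements C(pi)}, the weight of each word in $L_{C(\bm{\pi})}$ equals the twisted conjugacy weight of the class it represents.

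Finally, summing over the finitely many permissible $\bm{\pi}$ yields
\[
C^{\varphi}_{G,\omega_\Sigma}(z)\;=\;\sum_{\bm{\pi}\text{ permissible}}\;\text{growth series of }L_{C(\bm{\pi})},
\]
which is a finite sum of explicitly computable $\N$-rational functions, hence itself an explicitly computable $\N$-rational function by Definition \ref{def:Nrational}. The main subtlety, and the one place where care is needed, is verifying that the above correspondence is a genuine bijection between twisted conjugacy classes and words of $\bigsqcup_{\bm{\pi}}L_{C(\bm{\pi})}$ with matching weights: uniqueness of the pattern tuple for a given class follows because the patterns are determined by the $\Z^n$-cosets of $t_jg\varphi(t_j)^{-1}$ and each $H(\overline{\pi_j})$-coset has a unique minimal representative in $U^{\pi_j}_{H(\overline{\pi_j})}$ by Theorem \ref{thm:cosetreps}, while the weight-matching is precisely the content of Lemma \ref{lem:elements C(pi)} combined with the minimum-selecting property of Proposition \ref{prop:constructing language}.
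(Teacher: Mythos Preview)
Your overall strategy matches the paper's, but there is a genuine gap in the uniqueness claim. You assert that each twisted conjugacy class is represented by \emph{exactly one} element of \emph{exactly one} $C(\bm{\pi})$, citing Lemma~\ref{lem:elements C(pi)}; but that lemma only gives existence, not uniqueness, of the permissible tuple. Your justification --- that ``the patterns are determined by the $\Z^n$-cosets of $t_jg\varphi(t_j)^{-1}$'' --- tacitly fixes a particular representative $g$ of the class. If instead one starts from $h=t_kg\varphi(t_k)^{-1}\in[g]_\varphi$ with $k\neq1$, then by Lemma~\ref{lem:Z^n conjugates} the $D$ cosets ${}^{\Z^nt_j,\varphi}\{h\}$ form the \emph{same set} as the cosets ${}^{\Z^nt_j,\varphi}\{g\}$, but reindexed by the permutation $\sigma$ determined by $t_jt_k\in\Z^nt_{\sigma(j)}$. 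This yields a second $D$-tuple lying in $C(\pi_{\sigma(1)},\dots,\pi_{\sigma(D)})$, and in general $(\pi_{\sigma(1)},\dots,\pi_{\sigma(D)})\neq(\pi_1,\dots,\pi_D)$. Summing the growth series of the languages $L_{C(\bm\pi)}$ over \emph{all} permissible $\bm\pi$ therefore counts each class once for every such reindexing, so the sum is not the twisted conjugacy growth series. (A small concrete instance: in $G=\Z\times\Z/3$ with $\varphi(n,k)=(n,0)$, the class $\{(0,0),(0,1),(0,2)\}$ is represented in three distinct $C(\bm\pi)$'s, one for each cyclic shift of the pattern tuple.)

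The paper repairs exactly this point: it passes from the set $R$ of all permissible $D$-tuples to a subset $R_0$ containing one representative of each permutation orbit, and then argues that over $R_0$ each class is represented by a unique $D$-tuple in a unique $C(\bm\pi)$. With that reduction in place, your remaining steps --- apply Lemma~\ref{lem:C(pi) polyhedral set}, feed the result into Proposition~\ref{prop:constructing language}, and sum the finitely many $\N$-rational series --- coincide with the paper's argument.
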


\begin{proof}
    Recall the set $P$ of patterns from Definition \ref{def:patterns}. By Lemma \ref{lem:elements C(pi)}, for every $\varphi$-twisted conjugacy class of $G$ we can pick a permissible $D$-tuple $\bm{\pi}\in P^D$ such that $C(\bm{\pi})$ contains a $D$-tuple of words representing elements of this class. Moreover, the weight of the $\varphi$-twisted conjugacy class is realised by the word(s) of smallest length in this $D$-tuple. We now reduce the set of possible $\bm{\pi}$s to avoid overlap of candidates for the same $\varphi$-twisted conjugacy class.
    
    Consider the set $R$ of permissible $D$-tuples of patterns
    \[R=\{(\pi_1,\dots,\pi_D)\in P^D : (\pi_1,\dots,\pi_D) \text{ is permissible}\}.\]
    We reduce $R$ to a subset $R_0$ such that no two elements of $R_0$ are permutations of each other. Note that since we have computed the finite set $P$, we can also compute $R_0$. In particular, for every $\varphi$-twisted conjugacy class we can still pick $\bm{\pi}\in R_0$ and a $D$-tuple in $C(\bm{\pi})$ representing this class.

    Moreover, no $\varphi$-twisted conjugacy class is represented by more than one element of $R_0$, i.e. there do not exist different $\bm{\pi},\bm{\pi'}\in R_0$ and $D$-tuples of words $\left(w^{(1)},\dots,w^{(D)}\right)\in C(\bm{\pi})$ and $\left((w')^{(1)},\dots,(w')^{(D)}\right)\in C(\bm{\pi'})$ representing the same $\varphi$-twisted conjugacy class. Indeed, since we removed all the permutations of elements in $R$, every candidate $D$-tuple determines a unique element of $R_0$. Also by Theorem \ref{thm:cosetreps} the candidates are unique.

    Thus for every $\varphi$-twisted conjugacy class $[g]_\varphi$ (for $g\in G$) there is a unique $\bm{\pi}\in R_0$ and a unique $D$-tuple $\left(w^{(1)},\dots,w^{(D)}\right)\in C(\bm{\pi})$ of words representing elements of this class. Moreover, by Lemma \ref{lem:elements C(pi)},
    \[\omega\left([g]_\varphi\right)=\min \{\omega(\overline{w^{(1)}}),\dots, \omega(\overline{w^{(D)}})\}.\]
    Since $\phi_{\bm{\pi}}(C(\bm{\pi}))\subset \N^{m_{\bm{\pi}}}$ is an effectively constructible polyhedral set, Proposition \ref{prop:constructing language} yields, for $\bm{\pi}\in R_0$, a set of words $\mathcal{L}_{\bm{\pi}}\subset\Sigma^*$ with $\N$-rational (weighted) growth series that contains unique, geodesic representatives for the $D$-tuples from $C(\bm{\pi})$. Thus the set $\bigcup_{\bm{\pi}\in R_0}\mathcal{L}_{\bm{\pi}}$ contains unique, geodesic representatives for all $\varphi$-twisted conjugacy classes in $G$. To conclude, note that since all $\mathcal{L}_{\bm{\pi}}$ have an $\N$-rational growth series (explicitly computable by Proposition \ref{prop:constructing language}), that $\bigcup_{\bm{\pi}\in R_0}\mathcal{L}_{\bm{\pi}}$ also has an $\N$-rational growth series (simply the finite sum). In particular, the $\varphi$-twisted conjugacy growth series of $G$ with respect to $\Sigma$ is an explicitly computable $\N$-rational function.
\end{proof}

Using the description of a general twisted conjugacy class, given in Lemma \ref{lem:union tc class}, we conclude this section by arguing that the relative growth series of every twisted conjugacy class is rational.

The relative growth function of conjugacy classes and twisted conjugacy classes in virtually abelian groups was shown to be asymptotically equivalent to a polynomial in \cite{DE} and \cite{DL}, respectively, with the latter providing a formula for the degree of polynomial growth.

\begin{definition}[Rational set]
    In a group (or more generally a monoid) $G$, a subset $U$ is called \emph{rational} if there exists a finite subset $A\subset G$ and a regular language $L\subset A^*$ such that the set of elements of $G$ represented by the words in $L$ is exactly $U$.
\end{definition}

For more on rational subsets of virtually abelian groups, we direct the reader to \cite{CE}, \cite{CEL} and \cite{Silva}. In particular, we have the following, which is implicit in \cite{CEL}.

\begin{proposition}
    If $U$ is a rational subset of a finitely generated virtually abelian group then the relative growth series of $U$ is an $\N$-rational function, and can be explicitly computed from a description of the rational set.
\end{proposition}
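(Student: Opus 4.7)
The plan is to reduce the statement to the growth of polyhedral sets via the pattern machinery of Section \ref{sec:coset}. The key initial input is the standard characterization of rational subsets of a finitely generated virtually abelian group (essentially the content of the references cited just before the proposition, and implicit in \cite{CEL}): a subset $U \subseteq G$ is rational if and only if, for every $t \in T$, the intersection $U \cap \Z^n t$, identified with a subset of $\Z^n$ via the bijection $xt \mapsto x$, is a polyhedral subset $U_t$ of $\Z^n$; moreover, effectively constructible descriptions of the $U_t$ can be computed from any description of $U$ as a rational set. The proof begins by invoking this characterization to obtain the polyhedral sets $U_{t_1}, \ldots, U_{t_D}$.

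Next, I would apply Theorem \ref{thm:cosetreps} with $H$ the trivial subgroup of $\Z^n$. This yields, for each pattern $\pi \in P$, an effectively constructible polyhedral set $\phi_\pi(U^\pi_{\{0\}}) \subset \N^{m_\pi}$ such that the disjoint union $\bigcup_{\pi \in P} U^\pi_{\{0\}}$ is a complete set of unique geodesic representatives for the elements of $G$. To restrict to those words representing elements of $U$, I would use Notation \ref{not:representing words using matrices}: for $w \in W^\pi$ one has $\overline{w} = x_w\, t_\pi$, where $x_w \in \Z^n$ depends integer-affinely on $\phi_\pi(w)$ via the map
\[
    \mathcal{X}^\pi\colon\phi_\pi(w)\mapsto \begin{pmatrix} A^\pi_1\cdot\phi_\pi(w) + B^\pi_1 \\ \vdots \\ A^\pi_n\cdot \phi_\pi(w) + B^\pi_n \end{pmatrix}.
\]
Consequently $\overline{w} \in U$ precisely when $\mathcal{X}^\pi(\phi_\pi(w)) \in U_{t_\pi}$, so the set $\phi_\pi(U^\pi_{\{0\}}) \cap (\mathcal{X}^\pi)^{-1}(U_{t_\pi})$ is an effectively constructible polyhedral subset of $\N^{m_\pi}$ by Proposition \ref{prop:polyhedral sets basic properties}, and is in weight-preserving bijection with the elements of $U$ whose unique geodesic representative has pattern $\pi$.

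Finally, the pieces are assembled: the weight of a word $w \in W^\pi$ is the integer-affine functional $A^\pi_{n+1}\cdot \phi_\pi(w) + B^\pi_{n+1}$ of $\phi_\pi(w)$, so Proposition \ref{prop:Nrational}, applied (exactly as in \cite{Benson}, \cite{Evetts}, \cite{CEL}) to handle weighted rather than $\ell_1$ counting, shows that each piece has an $\N$-rational, explicitly computable generating function; the finite sum over $\pi \in P$ is the relative growth series of $U$, since the pieces partition the geodesic representatives of $U$. The main, and really only, substantive ingredient outside the machinery already developed in this paper is the initial decomposition of a rational subset of $G$ into polyhedral pieces along cosets of $\Z^n$, so the hard part will be extracting (or re-proving) an effective version of this classical characterization from the references \cite{CE}, \cite{CEL}, \cite{Silva}; everything else is a direct application of Proposition \ref{prop:polyhedral sets basic properties}, Theorem \ref{thm:cosetreps}, and Proposition \ref{prop:Nrational}.
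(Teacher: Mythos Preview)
Your proposal is correct and is essentially the argument that the paper defers to \cite{CEL}: the paper gives no proof here, merely stating the result as ``implicit in \cite{CEL}'', and your sketch is exactly the kind of argument one would extract from that reference using the pattern machinery (Benson's geodesic normal form, i.e.\ Theorem~\ref{thm:cosetreps} with $H=\{0\}$, together with the affine map $\mathcal{X}^\pi$ and Proposition~\ref{prop:Nrational}). The one nontrivial external input you identify---that a rational subset of $G$ meets each coset $\Z^n t$ in a polyhedral set, effectively---is precisely the content imported from \cite{CE}, \cite{CEL}, \cite{Silva}, so you have correctly located where the work lies.
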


By Lemma \ref{lem:union tc class}, every twisted conjugacy class in a virtually abelian group is a finite union of cosets of finitely generated subgroups. Such a set is an example of a rational set, the image of a rational expression of the form $\bigcup_{i=1}^n B_i^*a_i$, for finite sets $B_i$. And thus we have the following result.

\begin{theoremLetter}
    Let $G$ be a finitely generated virtually abelian group with a finite weighted (monoid) generating set $\Sigma$ and $\varphi\in \End(G)$. Then the relative growth series of every $\varphi$-twisted conjugacy class in $G$ with respect to $\Sigma$ is an $\N$-rational function. Moreover, given a description for $G$ as in Definition \ref{def:input data}, the $\N$-rational function is explicitly computable.
\end{theoremLetter}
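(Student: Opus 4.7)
The plan is to argue that a single twisted conjugacy class is a rational subset of $G$, with an explicit rational expression computable from the given data, and then apply the quoted fact that rational subsets of finitely generated virtually abelian groups have $\N$-rational, explicitly computable relative growth series.

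First, fix $g\in G$. By Lemma \ref{lem:union tc class},
\[
[g]_\varphi \;=\; \bigcup_{t\in T} H\!\left(tg\varphi(t)^{-1}\right)\cdot tg\varphi(t)^{-1},
\]
a finite union (indexed by the transversal $T$) of cosets of finitely generated subgroups of $\Z^n\le G$. The task therefore reduces to showing that this union is a rational subset of $G$ in the sense of the definition preceding the theorem, and that the rational description is algorithmically produced from a description of $G$, $\varphi$ and $g$.

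Next, for each $t\in T$, I would compute a $\Z$-basis of the subgroup $H(tg\varphi(t)^{-1})\subseteq\Z^n$. As observed right after the definition of $H(\cdot)$, the endomorphism $\Id_{\Z^n}-\tau_{tg\varphi(t)^{-1}}\circ\varphi|_{\Z^n}$ is explicitly representable by an $n\times n$ integer matrix (using the matrices $T_t, T_y$ constructed in the proof of Proposition \ref{prop:representing words using matrices}), so a basis of its image can be extracted by a standard integer linear algebra routine (e.g.\ Smith or Hermite normal form). Let $B_t\subset\Z^n$ be this basis together with the inverses of its elements, so that $B_t^{\ast}=H(tg\varphi(t)^{-1})$ as a monoid. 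In parallel, compute the element $a_t:=tg\varphi(t)^{-1}\in G$ in subgroup–transversal normal form using the multiplication function $f$ and the images $\varphi(\cdot)$ from the description of $G$. Setting $A=\bigcup_{t\in T}(B_t\cup\{a_t\})$, the finite set $A$ and the regular language
\[
L \;=\; \bigcup_{t\in T} B_t^{\ast}\,a_t \;\subset\; A^{\ast}
\]
exhibit $[g]_\varphi$ as a rational subset of $G$, with $L$ built explicitly from the data.

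Finally, apply the proposition on rational subsets stated just above the theorem: the relative growth series of the rational subset $[g]_\varphi$ is $\N$-rational and explicitly computable from the description of $L$. Combined with the explicit construction of $L$ above, this gives the theorem. The only subtlety, and the step where care is needed, is the effectiveness of the bases of the subgroups $H(tg\varphi(t)^{-1})$; however this is purely a matter of integer linear algebra on matrices that are already explicit, so there is no real obstacle beyond invoking a standard normal form algorithm.
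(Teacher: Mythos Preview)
Your proposal is correct and follows essentially the same route as the paper: invoke Lemma~\ref{lem:union tc class} to write $[g]_\varphi$ as a finite union $\bigcup_t B_t^* a_t$ of cosets of finitely generated subgroups of $\Z^n$, observe that this is a rational subset of $G$, and apply the quoted proposition on relative growth of rational subsets. You supply more detail on effectiveness (computing bases of the $H(\cdot)$ via integer normal forms and computing the $a_t$ via the multiplication data), but the argument is the same.
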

\bibliographystyle{plain}
\bibliography{main}
\end{document}